\newtheorem{theorem}{Theorem}[section]
\newtheorem{proposition}[theorem]{Proposition}
\newtheorem{remark}[theorem]{Remark}
\newtheorem{lemma}[theorem]{Lemma}
\title{Integral equation method for a Robin-type traction problem in a periodic domain}
\author{ Matteo Dalla Riva  \thanks{Dipartimento di Ingegneria, Universit\`a degli Studi di Palermo, Viale delle Scienze, Ed. 8, 90128 Palermo, Italy. E-mail: matteo.dallariva@unipa.it } ,  Gennady Mishuris \thanks{Department of Mathematics, Aberystwyth University, Ceredigion, Aberystwyth, SY23 3BZ Wales, UK. E-mail: ggm@aber.ac.uk} , Paolo Musolino\thanks{Dipartimento di Scienze Molecolari e Nanosistemi, Universit\`a Ca' Foscari Venezia, via Torino 155, 30172 Venezia Mestre, Italy. E-mail: paolo.musolino@unive.it}\ \thanks{{\it Corresponding author.}}}
\date{June 23, 2022}
\begin{document}

\maketitle

\noindent

{\bf Abstract:}    In this note, we consider a Robin-type traction  problem for a linearly elastic body occupying an infinite periodically perforated domain. After proving the uniqueness of the solution we use periodic elastic layer potentials to show that the solution can be written as the sum of a single layer potential, a constant function and a linear function of the space variable. The density of the periodic single layer potential and the constant are identified as the unique solutions of a certain integral equation.

\vspace{9pt}

\noindent
{\bf Keywords:}  Robin boundary value problem; integral representations, integral operators, integral equations methods; linearized elastostatics; periodic domain\vspace{9pt}

\noindent   
{{\bf 2020 Mathematics Subject Classification:}}  35J65; 31B10; 45F15; 74B05.

\section{Introduction}\label{introd}

The analysis of problems for perforated plates and other porous materials started at the beginning of the 20th century  (see, e.g., \cite{Na35, Ho35, Ho52, BaHi60, GoJa65}) and, at least in the first stage of development, was mainly  dedicated to problems with classic boundary conditions of Dirichlet and Neumann types (see Mityushev {\it et al.} \cite{MiAnGl22} for a thorough review). Recent advances in material sciences, however, brought the attention to problems with different kinds of boundary conditions, and even to problems with boundary conditions of nonlinear type. 
 
One example comes from the employment of porous coating,  or interfacial coatings in the case of inclusions, that are used to produce metamaterials  with properties  that are not typically present in nature and to enhance desirable   characteristics, such as the corrosion resistance, biocompatibility, biodegradation, and so on. When the thickness of the coating is negligibly smaller than the characteristic size of the pores, while its material properties (elastic, thermal, magneto-electric, etc.) are, in a sense,  weaker/softer  in comparison with those of the main composite material (matrix), the corresponding mathematical problems may degenerate into periodic boundary value problems with conditions of Robin type (see \cite{KlMo98, MiBe99, AnAvKoMo01, Mi01, Mi04, MiMoMo06, Be06, SoPiMiMi15, XuTiXi20}). To the best of our knowledge, these problems seem to be rarely considered in literature, and, therefore, we show in this note how we can effectually employ an integral equations approach.

Indeed, integral equation methods have proven to be a very useful tool to deal with problems that are relevant in the applications. The literature is massive and a complete list of applications may range from scattering theory and inverse problems (as, for example, in Ammari and Kang \cite{AmKa07}, Castro {\it et al.~}\cite{CaDuKa11}, Colton and Kress \cite{CoKr83}, Kirsch and Hettlich \cite{KiHe15}), to elasticity and thermoelasticity (as in Duduchava \cite{Du82}, Duduchava  {\it et al.~}\cite{DuNaSh95i, DuNaSh95ii}, Kupradze {\it et al.~}\cite{KuGeBaBu79}), fluid mechanics (for example in Kohr {\it et al.~}\cite{KoLaWe14}), and to the composite materials that are the subject of the present note (see also Chkadua {\it et al.~}\cite{ChMiNa11},  Duduchava {\it et al.~}\cite{DuSaWe99}). 

In addition, the study of composite materials  often boils down  to the analysis of boundary value problems  in periodic domains  (see, e.g., Milton \cite[Ch.~1]{Mi02}, Movchan {\it et al.}~\cite{MoMoPo02}). In dimension 2, these problems can be tackled with complex variable techniques  (see, e.g., Kapanadze {\it et al.~}\cite{KaMiPe15a, KaMiPe15b},   
  Dryga\'s, {\it et al.~}\cite{DrGlMiNa20}, 
  Gluzman {\it et al.~}\cite{GlMiNa18},  Kapanadze {\it et al.~}\cite{KaMiPe16}, Mityushev {\it et al.~}\cite{MiObPeRo08}). In higher dimensions, complex variable techniques are in general not an option, but integral equation methods provide an effective alternative.

We now describe the problem of this note: We consider a linearly elastic body that occupies an infinite periodically perforated domain. On the boundary of the body we set a Robin-type traction condition,  which prescribes a linear relation between the traction applied to the boundary  and the displacement of the boundary points.  After introducing the corresponding system of differential equations and boundary conditions, we will analyze it by means of the integral equation method.  
We start by introducing the geometric setting. We fix once for all
\[
n\in {\mathbb{N}}\setminus\{0,1 \}\,,\qquad  (q_{11},\dots,q_{nn})\in]0,+\infty[^{n}\,.
\]
Here ${\mathbb{N}}$ denotes the 
set of natural numbers including $0$. We denote by $Q$ the fundamental periodicity cell defined by
\begin{equation}\label{Q}
Q\equiv\Pi_{j=1}^{n}]0,q_{jj}[
\end{equation}
 and by $\nu_Q$ the outward unit normal to $\partial Q$, where it exists. We denote by $q$ the diagonal matrix defined by
\begin{equation}\label{q}
q\equiv \left(
\begin{array}{cccc}
q_{11} &   0 & \dots & 0   
\\
0          &q_{22} &\dots & 0
\\
\dots & \dots & \dots & \dots  
\\
0& 0 & \dots & q_{nn}
\end{array}\right)\, .
\end{equation}
%and by $\nu_Q$ the outward unit normal to $\partial Q$, where it exists. 
We will define our periodically perforated domain, by removing from $\mathbb{R}^n$ congruent copies of a bounded domain of class $C^{m,\alpha}$. Therefore, we fix once and for all
\[
m\in {\mathbb{N}}\setminus\{0\}\,,\qquad\alpha\in]0,1[\,.
\]
Then we assume that
\[ 
\text{${\Omega_Q}$ is a bounded open subset of ${\mathbb{R}}^{n}$ of class $C^{m,\alpha}$ such that $\overline{\Omega_Q}\subseteq Q$,}
\]
and we define the periodic domains
\[
{\mathbb{S}} [\Omega_Q]\equiv 
\bigcup_{z\in{\mathbb{Z}}^{n} }(qz+\Omega_Q)=q{\mathbb{Z}}^{n}+\Omega_Q\,,\qquad
{\mathbb{S}} [\Omega_Q]^{-}\equiv {\mathbb{R}}^{n}\setminus\overline{\mathbb{S}[\Omega_Q]}\,.\nonumber
\] 

We now introduce a Robin boundary value problem in ${\mathbb{S}} [\Omega_Q]^{-}$ for the Lam\'e equations in ${\mathbb{S}} [\Omega_Q]$. To do so, we denote by $T$ the function from $ ]1-(2/n),+\infty[\times M_n(\mathbb{R})$ to $M_n(\mathbb{R})$ defined by 
\[
T(\omega,A)\equiv (\omega-1)(\mathrm{tr}A)I_n+(A+A^t) \qquad \forall  \omega \in ]1-(2/n),+\infty[\,,\ A \in M_n(\mathbb{R})\,.
\] 
Here $M_n(\mathbb{R})$ denotes the space of $n\times n$ matrices with real entries, $I_n$ denotes the $n\times n$ identity matrix, $\mathrm{tr}A$ and $A^t$ denote the trace and the transpose matrix of $A$, respectively. We note that $(\omega-1)$ plays the role of the ratio between the first and second Lam\'e constants and that the classical linearization of the Piola Kirchoff tensor equals the second Lam\'e constant times $T(\omega,\cdot)$ (cf., {e.g.}, Kupradze {\it et al.~}\cite{KuGeBaBu79}). 
Then we consider also the following assumptions:
\[
\begin{split}
&\text{Let $B \in M_n(\mathbb{R})$.}\\
&\text{Let $a,b \in C^{m-1,\alpha}(\partial \Omega_Q,M_n(\mathbb{R}))$ be such that:}\\
&\text{\ \  $\bullet$ $\mathrm{det}\,  a(x) \neq 0$ for all $x \in \partial \Omega_Q$,}\\
&\text{\ \  $\bullet$ $\xi^t a^{-1}(x)b(x)\xi \leq 0$ for all $x \in \partial \Omega_Q$, $\xi \in \mathbb{R}^n$, and $\mathrm{det}\int_{\partial \Omega_Q} a^{-1}b\,d\sigma\neq 0$,}\\
&\text{\ \  $\bullet$ there exists $x_0 \in \partial \Omega_Q$ such that $\mathrm{det}\,b(x_0)\neq 0$.}\\
&\text{Let $g \in C^{m-1,\alpha}(\partial \Omega_Q, \mathbb{R}^n)$.}
\end{split}
\]
As we shall see the conditions on $a$ and $b$ play a crucial role for the existence and uniqueness of a solution.
Then we take
\[
\omega \in ]1-(2/n),+\infty[\, ,
\]
and we consider the following Robin boundary value problem 
 \begin{equation}\label{bvp}
 \left \lbrace 
 \begin{array}{ll}
  \mathrm{div}\, T(\omega, Du )= 0 & \mathrm{in}\  {\mathbb{S}} [\Omega_Q]^-\,, \\
u(x+qe_j) =u(x) +Be_j&  \textrm{$\forall x \in \overline{\mathbb{S}[\Omega_Q]^{-}}, \forall j\in \{1,\dots,n\}$}, \\
a(x)T(\omega,Du(x))\nu_{\Omega_Q}(x)+ b(x)u(x)=g(x) & \textrm{$\forall x \in \partial \Omega_Q$}\,,
 \end{array}
 \right.
 \end{equation}
where $\{e_1,\dots,e_n\}$ denotes the canonical basis of $\mathbb{R}^n$ and $\nu_{\Omega_Q}$ denotes the outward unit normal to $\partial \Omega_Q$. 

The aim of this note is to prove that the solution to problem \eqref{bvp} exists and is unique, then to convert problem \eqref{bvp} into an equivalent integral equation,  and finally to show that the solution can be written as the application of a specific integral operator to a density function that is the solution of a certain boundary integral equation. The long term goal is to provide some tools that can be used to analyze perturbation problems for the Lam\'e equations in periodic domains by means of  the so-called {\it Functional Analytic Approach} (see~\cite{DaLaMu21}).  Indeed, the Functional Analytic Approach has been largely used to study periodic problems for the Laplace equations, also in connection to the analysis of effective properties (see, e.g., \cite{DaLuMuPu21, DaMuPu19, LuMu20, LuMuPu19}),  but the application to perturbation problems for the Lam\'e equations in periodic domains is more limited (see the papers \cite{DaMu14} and \cite{FaLuMu21}). In particular, although several techniques are available for the analysis of this type of problems, in this note we develop the tools we wish to use to carry out the analysis done in \cite{MuMi18} for the analysis of degenerating boundary conditions in the case of a (non-periodic) mixed problem for the Laplace equation. More precisely, with the results of the present note we wish to study the asymptotic behavior of the solution of a problem like \eqref{bvp} where the coefficient in front of $u$ tends to $0$.

\section{Some notation}\label{not}

We  denote the norm on 
a   normed space ${\mathcal X}$ by $\|\cdot\|_{{\mathcal X}}$. Let 
${\mathcal X}$ and ${\mathcal Y}$ be normed spaces. We equip the  
space ${\mathcal X}\times {\mathcal Y}$ with the norm defined by 
$\|(x,y)\|_{{\mathcal X}\times {\mathcal Y}}\equiv \|x\|_{{\mathcal X}}+
\|y\|_{{\mathcal Y}}$ for all $(x,y)\in  {\mathcal X}\times {\mathcal 
Y}$, while we use the Euclidean norm for ${\mathbb{R}}^{n}$.
 We denote by $\mathcal{L}(\mathcal{X},\mathcal{Y})$ the space of linear and continuous maps from $\mathcal{X}$ to $\mathcal{Y}$, equipped with its usual norm of the uniform convergence on the unit sphere of $\mathcal{X}$. We denote by $I$ the identity operator. The inverse function of an 
invertible function $f$ is denoted $f^{(-1)}$, as opposed to the 
reciprocal of a real-valued function $g$, or the inverse of a 
matrix $B$, which are denoted $g^{-1}$ and $B^{-1}$, respectively.     If $B$ is a 
matrix, then 
 $B_{ij}$ denotes 
the $(i,j)$ entry of $B$. If $x\in\mathbb{R}^n$, then $x_{j}$ denotes the $j$-th coordinate of $x$ and 
$|x|$ denotes the Euclidean modulus of $ x$. A  dot ``$\cdot$'' denotes the inner product in ${\mathbb R}^{n}$. For all $R>0$ and all $x\in{\mathbb{R}}^{n}$ we denote by ${\mathbb{B}}_{n}( x,R)$ the ball $\{
y\in{\mathbb{R}}^{n}:\, | x- y|<R\}$.   If 
$\mathcal{S}$ is a subset of ${\mathbb {R}}^{n}$, then $\overline{\mathcal{S}}$ 
denotes the 
closure of $\mathcal{S}$ and $\partial\mathcal{S}$ denotes the boundary of $\mathcal{S}$. If we further assume that $\mathcal{S}$ is measurable then $|\mathcal{S}|$ denotes the $n$-dimensional measure of $\mathcal{S}$. Let $q$ be as in definition \eqref{q}. Let $\mathcal{P}$ be a subset of $\mathbb{R}^n$ such that $x+qz \in \mathcal{P}$ for all $x \in \mathcal{P}$ and for all $z \in {\mathbb{Z}^n}$. We say that a function $f$ on $\mathcal{P}$ is $q$-periodic if
\[
f(x+qz)=f(x) \qquad \forall x \in \mathcal{P}\,,\quad \forall z \in \mathbb{Z}^n\, .
\]
Let $\mathcal{O}$ be an open 
subset of ${\mathbb{R}}^{n}$. Let $k\in\mathbb{N}$.  The space of $k$ times continuously 
differentiable real-valued functions on $\mathcal{O}$ is denoted by 
$C^{k}(\mathcal{O},{\mathbb{R}})$, or more simply by $C^{k}(\mathcal{O})$. If $f\in C^{k}(\mathcal{O})$  then $\nabla f$ denotes the gradient $\left(\frac{\partial f}{\partial
x_1},\dots,\frac{\partial f}{\partial
x_n}\right)$ which we think as a column vector. Let $r\in {\mathbb{N}}\setminus\{0\}$. Let $f\equiv(f_1,\dots,f_r)\in \left(C^{k}(\mathcal{O})\right)^{r}$. Then $Df$ denotes the Jacobian matrix
$\left(\frac{\partial f_s}{\partial
x_l}\right)_{  (s,l)\in\{1,\dots,r\}\times\{1,\dots,n\}}$.  Let  $\eta\equiv
(\eta_{1},\dots ,\eta_{n})\in{\mathbb{N}}^{n}$, $|\eta |\equiv
\eta_{1}+\dots +\eta_{n}  $. Then $D^{\eta} f$ denotes
$\frac{\partial^{|\eta|}f}{\partial
x_{1}^{\eta_{1}}\dots\partial x_{n}^{\eta_{n}}}$.  The
subspace of $C^{k}(\mathcal{O})$ of those functions $f$ whose derivatives $D^{\eta }f$ of
order $|\eta |\leq k$ can be extended with continuity to 
$\overline{\mathcal{O}}$  is  denoted $C^{k}(
\overline{\mathcal{O}})$. Let $\beta\in]0,1[$. The
subspace of $C^{k}(\overline{\mathcal{O}}) $  whose
functions have $k$-th order derivatives that are uniformly
H\"{o}lder continuous in $\overline{\mathcal{O}}$ with exponent  $\beta$ is denoted $C^{k,\beta} (\overline{\mathcal{O}})$  
(cf., {e.g.},~Gilbarg and 
Trudinger~\cite{GiTr83}). If $f\in C^{0,\beta}(\overline{\mathcal{O}})$, then its  $\beta$-H\"{o}lder constant $|f:\overline{\mathcal{O}}|_{\beta}$ is defined as  
 $\sup\left\{
\frac{|f( x )-f( y)|}{| x- y|^{\beta}
}: x, y\in \overline{\mathcal{O}} ,  x\neq
 y\right\}$. The subspace of $C^{k}(\overline{\mathcal{O}}) $ of those functions $f$ such that $f_{|\overline{(\mathcal{O}\cap{\mathbb{B}}_{n}(0,R))}}\in
C^{k,\beta}(\overline{(\mathcal{O}\cap{\mathbb{B}}_{n}(0,R))})$ for all $R\in]0,+\infty[$ is denoted $C^{k,\beta}_{{\mathrm{loc}}}(\overline{\mathcal{O}}) $.  Let 
$\mathcal{S}\subseteq {\mathbb{R}}^{r}$. Then $C^{k
,\beta }(\overline{\mathcal{O}} ,\mathcal{S})$ denotes
$\left\{f\in \left( C^{k,\beta} (\overline{\mathcal{O}})\right)^{r}:\ f(
\overline{\mathcal{O}})\subseteq \mathcal{S}\right\}$. Then we set
\[
\begin{split}
C^{k}_{b}(\overline{\mathcal{O}},\mathbb{R}^n)\equiv
\{
u\in C^{k}(\overline{\mathcal{O}},\mathbb{R}^n):\,
D^{\eta}u\ {\mathrm{is\ bounded}}\ \textrm{for all }\eta\in {\mathbb{N}}^{n}\
{\mathrm{with}}\ |\eta|\leq k
\}\,,
\end{split}
\]
and we equip $C^{k}_{b}(\overline{\mathcal{O}},\mathbb{R}^n)$ with its usual  norm
\[
\|u\|_{ C^{k}_{b}(\overline{\mathcal{O}},\mathbb{R}^n) }\equiv
\sum_{\eta \in \mathbb{N}^n\, , \ |\eta|\leq k}\sup_{x\in \overline{\Omega} }|D^{\eta}u(x)|\,.
\]
We define
\[
\begin{split}
C^{k,\beta}_{b}(\overline{\mathcal{O}},\mathbb{R}^n)\equiv
\{
u\in C^{k,\beta}(\overline{\mathcal{O}},\mathbb{R}^n):\,
D^{\eta}u\ {\mathrm{is\ bounded}}\ \textrm{for all }\eta\in {\mathbb{N}}^{n}\
{\mathrm{with
}}\ |\eta|\leq k
\}\,,
\end{split}
\]
and we equip $C^{k,\beta}_{b}(\overline{\mathcal{O}},\mathbb{R}^n)$ with its usual  norm
\[
\|u\|_{ C^{k,\beta}_{b}(\overline{\mathcal{O}},\mathbb{R}^n) }\equiv
\sum_{\eta \in \mathbb{N}^n\, ,\ |\eta|\leq k}\sup_{x\in \overline{\mathcal{O}} }|D^{\eta}u(x)|
+\sum_{\eta \in \mathbb{N}^n\, ,\ |\eta| = k}|D^{\eta}u: \overline{\mathcal{O}} |_{\beta}\, .
\]

Let $\mathcal{O} $ be a bounded
open subset of  ${\mathbb{R}}^{n}$. Then $C^{k}(\overline{\mathcal{O}} )$ 
and $C^{k,\beta}(\overline{\mathcal{O}})$ equipped with their usual norm are well known to be 
Banach spaces  (cf., {e.g.}, Troianiello~\cite[\S 1.2.1]{Tr87}). 
We say that a bounded open subset $\mathcal{O}$ of ${\mathbb{R}}^{n}$ is of class 
$C^{k}$ or of class $C^{k,\beta}$, if its closure is a 
manifold with boundary imbedded in 
${\mathbb{R}}^{n}$ of class $C^{k}$ or $C^{k,\beta}$, respectively
 (cf., {e.g.}, Gilbarg and Trudinger~\cite[\S 6.2]{GiTr83}). 
  For standard properties of functions 
in Schauder spaces, we refer the reader to Gilbarg and 
Trudinger~\cite{GiTr83} and to Troianiello~\cite{Tr87}
(see also Lanza de Cristoforis  \cite[\S 2, Lem.~3.1, 4.26, Thm.~4.28]{La91}, 
 Lanza de Cristoforis   and Rossi \cite[\S 2]{LaRo04}).
If $\mathcal{M}$ is a manifold  imbedded in 
${\mathbb{R}}^{n}$ of class $C^{k,\beta}$ with $k\ge 1$, then we can define the Schauder spaces also on $\mathcal{M}$ by 
exploiting the local parametrization. In particular, if $\mathcal{O}$ is a bounded open set of class $C^{k,\beta}$ with $k\ge 1$, then we can consider 
the space $C^{l,\beta}(\partial\mathcal{O})$ on $\partial\mathcal{O}$
with $l \in \{0,\dots,k\}$ and the trace operator from $C^{l,\beta}(\overline{\mathcal{O}})$ to
$C^{l,\beta}(\partial\mathcal{O})$ is linear and continuous.
 Now let $Q$ be as in definition \eqref{Q}. If ${\mathcal{S}_Q}$ is an arbitrary subset of ${\mathbb{R}}^{n}$  such that
 $\overline{\mathcal{S}_Q}\subseteq Q$, then we define
\[
{\mathbb{S}} [{\mathcal{S}_Q}]\equiv 
\bigcup_{z\in{\mathbb{Z}}^{n} }(qz+{\mathcal{S}_Q})=q{\mathbb{Z}}^{n}+{\mathcal{S}_Q}\,,\qquad
{\mathbb{S}} [{\mathcal{S}_Q}]^{-}\equiv {\mathbb{R}}^{n}\setminus\overline{\mathbb{S}[{\mathcal{S}_Q}]}\,.\nonumber
\] 
We note that if $\mathbb{R}^n\setminus \overline{\mathcal{S}_Q}$ is connected, then $\mathbb{S}[{\mathcal{S}_Q}]^{-}$ is also connected.  If ${\Omega_Q}$ is an open subset of $\mathbb{R}^n$ such that $\overline{\Omega_Q} \subseteq Q$, then we denote by $C^{k}_{q}(\overline{\mathbb{S}[\Omega_Q]},\mathbb{R}^n )$, $C^{k,\beta}_{q}(\overline{\mathbb{S}[\Omega_Q]},\mathbb{R}^n )$, $C^{k}_{q}(\overline{\mathbb{S}[\Omega_Q]^{-}},\mathbb{R}^n )$, and $C^{k,\beta}_{q}(\overline{\mathbb{S}[\Omega_Q]^{-}},\mathbb{R}^n )$ the subsets of the $q$-periodic functions belonging to $C^{k}_{b}(\overline{\mathbb{S}[\Omega_Q]},\mathbb{R}^n )$,  to $C^{k,\beta}_{b}(\overline{\mathbb{S}[\Omega_Q]},\mathbb{R}^n )$,  to $C^{k}_{b}(\overline{\mathbb{S}[\Omega_Q]^{-}},\mathbb{R}^n )$, and to $C^{k,\beta}_{b}(\overline{\mathbb{S}[\Omega_Q]^{-}},\mathbb{R}^n )$, respectively. \\ We regard  $C^{k}_{q}(\overline{\mathbb{S}[\Omega_Q]},\mathbb{R}^n )$, $C^{k,\beta}_{q}(\overline{\mathbb{S}[\Omega_Q]},\mathbb{R}^n )$, $C^{k}_{q}(\overline{\mathbb{S}[\Omega_Q]^{-}},\mathbb{R}^n )$, and $C^{k,\beta}_{q}(\overline{\mathbb{S}[\Omega_Q]^{-}},\mathbb{R}^n )$ as Banach subspaces of $C^{k}_{b}(\overline{\mathbb{S}[\Omega_Q]},\mathbb{R}^n )$,  of $C^{k,\beta}_{b}(\overline{\mathbb{S}[\Omega_Q]},\mathbb{R}^n )$,  of $C^{k}_{b}(\overline{\mathbb{S}[\Omega_Q]^{-}},\mathbb{R}^n )$, and of $C^{k,\beta}_{b}(\overline{\mathbb{S}[\Omega_Q]^{-}},\mathbb{R}^n )$, respectively. 

\section{Preliminaries of periodic potential theory for the Lam\'e equations}\label{prel}

In order to construct the solution of problem \eqref{bvp}, we will exploit a periodic version of potential theory for the Lam\'e equations and we begin by introducing some notation and tools.

We start by denoting by $S_{n}$  the function from  
${\mathbb{R}}^{n}\setminus\{0\}$ to
${\mathbb{R}}$ defined by 
\[
S_{n}(x)\equiv
\left\{
\begin{array}{lll}
\frac{1}{s_{n}}\log |x| \qquad &   \forall x\in 
{\mathbb{R}}^{n}\setminus\{0\},\quad & {\mathrm{if}}\ n=2\,,
\\
\frac{1}{(2-n)s_{n}}|x|^{2-n}\qquad &   \forall x\in 
{\mathbb{R}}^{n}\setminus\{0\},\quad & {\mathrm{if}}\ n>2\,,
\end{array}
\right.
\]
where $s_{n}$ denotes the $(n-1)$-dimensional measure of 
$\partial{\mathbb{B}}_{n}(0,1)$. $S_{n}$ is well-known to be a 
fundamental solution of the Laplace operator $\Delta=\sum_{j=1}^n\partial_{x_j}^2$.

We denote by $\Gamma_{n,\omega}(\cdot)$ the matrix valued function from $\mathbb{R}^n \setminus \{0\}$ to $M_{n}(\mathbb{R})$ that takes $x$ to the matrix $\Gamma_{n,\omega}(x)$ with $(i,j)$ entry defined by
\[
\Gamma_{n,\omega,i}^j(x)\equiv \frac{\omega+2}{2(\omega+1)}\delta_{i,j}S_n(x)-\frac{\omega}{2(\omega+1)}\frac{1}{s_n}\frac{x_i x_j}{|x|^n}\qquad\forall (i,j)\in\{1,\dots,n\}^2\,,
\]
where $\delta_{i,j}=1$ if $i=j$, and $\delta_{i,j}=0$ if $i \neq j$.  It is  well known that $\Gamma_{n,\omega}$ is a fundamental solution of the operator
\[
L[\omega]\equiv \Delta+\omega \nabla \mathrm{div}\,.
\]
We observe that the classical operator of linearized homogenous isotropic elastostatics equals $L[\omega]$ times the second constant of Lam\'e, and that $L[\omega]u=\mathrm{div} \,T(\omega,Du)$ for all regular vector valued functions $u$, and that the classical fundamental solution of the operator of linearized homogenous and isotropic elastostatics equals $\Gamma_{n,\omega}$ times the reciprocal of the second constant of Lam\'e (cf., {e.g.}, Kupradze {\it et al.~}\cite{KuGeBaBu79}). We find also convenient to set
\[
\Gamma_{n,\omega}^j\equiv \bigl(\Gamma_{n,\omega,i}^j\bigr)_{i \in \{1,\dots,n\}}\,,
\]
which we think as a column vector for all $j\in\{1,\dots,n\}$.  

To construct periodic elastic layer potentials, we need to use a periodic fundamental solution for the Lam\'e equations. In the following theorem (see \cite[Thm.~3.1]{DaMu14}) we introduce a periodic analog of the fundamental solution of $L[\omega]$ (cf., {e.g.}, Ammari and Kang \cite[Lemma 9.21]{AmKa07}, Ammari {\it et al.~}\cite[Lemma 3.2]{AmKaLi06}). To do so we need the following notation.  We denote by ${\mathcal{S}}({\mathbb{R}}^{n},\mathbb{C})$ the Schwartz space of complex valued rapidly decreasing functions. $\mathcal{S}'(\mathbb{R}^n,\mathbb{C})$ denotes the space of complex tempered distributions and $M_{n}\bigl(\mathcal{S}'(\mathbb{R}^n,\mathbb{C})\bigr)$ denotes the set of $n\times n$ matrices with entries in $\mathcal{S}'(\mathbb{R}^n,\mathbb{C})$. If $y\in{\mathbb{R}}^{n}$ and $f$ is a function defined in ${\mathbb{R}}^{n}$, we set
$\tau_{y}f(x)\equiv f(x-y)$ for all $x\in {\mathbb{R}}^{n}$. If $u\in \mathcal{S}'(\mathbb{R}^n,\mathbb{C})$, then we set
\[
<\tau_{y}u,f>\equiv<u,\tau_{-y}f>\qquad\forall f\in  \mathcal{S}(\mathbb{R}^n,\mathbb{C})\,.
\]
Finally, $L^1_{\mathrm{loc}}(\mathbb{R}^n)$ denotes the space of (equivalence classes of) locally summable measurable functions from $\mathbb{R}^n$ to $\mathbb{R}$.

\begin{theorem}
\label{psper}
Let $\Gamma_{n,\omega}^{q}\equiv (\Gamma_{n,\omega,j}^{q,k})_{(j,k)\in\{1,\dots,n\}^2}$ be the element of $M_{n}\bigl(\mathcal{S}'(\mathbb{R}^n,\mathbb{C})\bigr)$ with $(j,k)$ entry defined by
\[
\begin{split}%\begin{equation}\label{eq:distfs1}
\Gamma_{n,\omega,j}^{q,k}\equiv \sum_{z \in \mathbb{Z}^n \setminus \{0\}} \frac{1}{4 \pi^2 |Q|  |q^{-1}z|^2}\Biggl[ -\delta_{j,k}+\frac{\omega}{\omega+1}&\frac{(q^{-1}z)_j(q^{-1}z)_k}{|q^{-1}z|^2}\Biggr]E_{2 \pi iq^{-1}z} \\ &\qquad \forall (j,k) \in \{1,\dots,n\}^2\,,
\end{split}
\]%\end{equation}
where $E_{2\pi i q^{-1} z}$ is the function from $\mathbb{R}^n$ to $\mathbb{C}$ defined by
\[
E_{2\pi i q^{-1} z}(x)\equiv e^{2\pi i (q^{-1} z)\cdot x}
\qquad  \forall x\in{\mathbb{R}}^{n}
\] for all $z\in\mathbb{Z}^n$.
 Then the following statements hold.
\begin{enumerate}
\item[(i)]
\[
\tau_{q_{ll}e_l}\Gamma_{n,\omega,j}^{q,k}=\Gamma_{n,\omega,j}^{q,k} \qquad \forall l  \in \{1,\dots,n\}\,,
\]
for all $(j,k) \in \{1,\dots,n\}^2$.
\item[(ii)]
\[
L[\omega] \Gamma_{n,\omega}^{q}=\sum_{z \in \mathbb{Z}^n}\delta_{qz}I_n-\frac{1}{ |Q|}I_n \qquad \text{$\mathrm{in}$ $M_{n}\bigl(\mathcal{S}'(\mathbb{R}^n,\mathbb{C})\bigr)$}\,,
\]
where $\delta_{qz}$ denotes the Dirac measure with mass at $qz$ for all $z \in \mathbb{Z}^n$.
\item[(iii)] $\Gamma_{n,\omega}^{q}$ is real analytic from $\mathbb{R}^n \setminus q\mathbb{Z}^n$ to $M_n(\mathbb{R})$.
\item[(iv)] The difference $\Gamma_{n,\omega}^{q}-\Gamma_{n,\omega}$ can be extended to a real analytic function from $(\mathbb{R}^n \setminus q \mathbb{Z}^n) \cup\{0\}$ to $M_n(\mathbb{R})$ which we denote by $R^q_{n,\omega}$. Moreover
\[
L[\omega] R^q_{n,\omega}=\sum_{z \in \mathbb{Z}^n\setminus\{0\}}\delta_{qz}I_n-\frac{1}{ |Q|}I_n 
\]
in the sense of distributions.
\item[(v)] $\Gamma_{n,\omega,j}^{q,k}$ is real valued and is in $L^1_{\mathrm{loc}}(\mathbb{R}^n)$, for all $(j,k)\in \{1,\dots,n\}^2$.
\item[(vi)] $\Gamma_{n,\omega}^q(x)=\Gamma_{n,\omega}^{q}(-x)$ for all $x \in \mathbb{R}^n \setminus q\mathbb{Z}^n$.
\end{enumerate}
\end{theorem}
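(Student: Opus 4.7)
The plan is to verify the six items in order, justifying termwise differentiation in $\mathcal{S}'(\mathbb{R}^n,\mathbb{C})$ by the $O(|z|^{-2})$ decay of the Fourier coefficients, and using Poisson summation on the lattice $q\mathbb{Z}^n$ for (ii) together with interior elliptic regularity for (iii)--(iv). Item (i) is immediate: $E_{2\pi iq^{-1}z}(x+q_{ll}e_l)=e^{2\pi iz_l}E_{2\pi iq^{-1}z}(x)=E_{2\pi iq^{-1}z}(x)$ for each $z\in\mathbb{Z}^n$, and $q$-periodicity is preserved by distributional summation.

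For (ii) I would apply $L[\omega]$ termwise. Writing $E\equiv E_{2\pi iq^{-1}z}$, one has $\Delta E=-4\pi^2|q^{-1}z|^2 E$ and $\partial_j\partial_l E=-4\pi^2(q^{-1}z)_j(q^{-1}z)_l E$, so that if $v_j$ denotes the coefficient of $E$ in $\Gamma^{q,k}_{n,\omega,j}$, the single-mode contribution to the $j$-th component of $L[\omega](\Gamma^q_{n,\omega}e_k)$ equals $-4\pi^2\bigl(|q^{-1}z|^2 v_j+\omega(q^{-1}z)_j(v\cdot q^{-1}z)\bigr)E$. A direct computation shows that the bracketed quantity collapses to $-\delta_{j,k}/(4\pi^2|Q|)$: the $\omega/(\omega+1)$-weighted tensor contribution inside $v_j$ and the tensor term arising from $\omega\nabla\mathrm{div}$ cancel exactly. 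Hence each $z$-term contributes $(\delta_{j,k}/|Q|)E$, and Poisson summation on $q\mathbb{Z}^n$, namely $\sum_{z\in\mathbb{Z}^n}E_{2\pi iq^{-1}z}=|Q|\sum_{z\in\mathbb{Z}^n}\delta_{qz}$ in $\mathcal{S}'(\mathbb{R}^n,\mathbb{C})$, yields the stated identity after accounting for the missing $z=0$ mode.

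For (iii), the identity from (ii) reads $L[\omega]\Gamma^q_{n,\omega}=-I_n/|Q|$ distributionally on $\mathbb{R}^n\setminus q\mathbb{Z}^n$; since $L[\omega]$ is a constant-coefficient elliptic operator and the right-hand side is real analytic, interior elliptic regularity gives real analyticity on that set. For (iv), the difference $\Gamma^q_{n,\omega}-\Gamma_{n,\omega}$ is real analytic on $\mathbb{R}^n\setminus q\mathbb{Z}^n$ by (iii) and the known smoothness of $\Gamma_{n,\omega}$ there; near the origin, the known identity $L[\omega]\Gamma_{n,\omega}=\delta_0 I_n$ combined with (ii) yields $L[\omega](\Gamma^q_{n,\omega}-\Gamma_{n,\omega})=\sum_{z\neq 0}\delta_{qz}I_n-I_n/|Q|$, whose right-hand side is real analytic on any ball around $0$ disjoint from the other lattice points, and elliptic regularity again produces a real analytic extension through $0$.

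Items (v) and (vi) follow by pairing the terms indexed by $z$ and $-z$: the coefficient of $E_{2\pi iq^{-1}z}$ is invariant under $z\mapsto -z$, so each pair sums to a real, even multiple of $\cos(2\pi(q^{-1}z)\cdot x)$, which gives real-valuedness of each entry and $\Gamma^q_{n,\omega}(x)=\Gamma^q_{n,\omega}(-x)$. For local integrability, real analyticity on $\mathbb{R}^n\setminus q\mathbb{Z}^n$ together with $q$-periodicity reduces the question to a neighborhood of $0$, where (iv) writes $\Gamma^q_{n,\omega}$ as the sum of a bounded real analytic function and $\Gamma_{n,\omega}$, whose entries have the classical integrable singularities $|x|^{2-n}$ or $\log|x|$ near the origin. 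The main obstacle is the algebraic cancellation underlying (ii); once this is checked, the remaining steps---termwise differentiation in $\mathcal{S}'(\mathbb{R}^n,\mathbb{C})$, Poisson summation with covolume $|Q|$, elliptic regularity, and the $\pm z$ pairing---are routine bookkeeping.
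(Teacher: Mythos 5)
The paper does not actually prove Theorem~\ref{psper}: it is recalled from \cite[Thm.~3.1]{DaMu14}, so there is no in-text proof to compare against line by line. That said, your blind reconstruction is correct and follows the standard route for such periodic fundamental solutions, which is almost certainly the route taken in the cited work. The decisive algebraic step in (ii) checks out: with $\xi\equiv 2\pi q^{-1}z$ and $v_j = \frac{1}{|Q||\xi|^2}\bigl[-\delta_{j,k}+\frac{\omega}{\omega+1}\frac{\xi_j\xi_k}{|\xi|^2}\bigr]$ one gets $\xi\cdot v = -\frac{\xi_k}{|Q||\xi|^2(\omega+1)}$, so $|\xi|^2 v_j + \omega\,\xi_j(\xi\cdot v) = -\delta_{j,k}/|Q|$ and the tensor parts cancel exactly as you claim; termwise this gives $L[\omega]$ of each mode equal to $\frac{\delta_{j,k}}{|Q|}E_{2\pi i q^{-1}z}$, and Poisson summation $\sum_{z\in\mathbb{Z}^n}E_{2\pi i q^{-1}z}=|Q|\sum_{z\in\mathbb{Z}^n}\delta_{qz}$ on the lattice $q\mathbb{Z}^n$ (covolume $|Q|$), minus the $z=0$ term, yields (ii). The $O(|z|^{-2})$ decay ensures $\mathcal{S}'$-convergence (pairing against $\hat\varphi(-q^{-1}z)$ for $\varphi\in\mathcal{S}$), and termwise differentiation is then legitimate by continuity of derivatives on $\mathcal{S}'$. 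Your treatment of (i), the elliptic analytic hypoellipticity in (iii)--(iv), the $\pm z$ pairing for (v)--(vi), and the reduction of local integrability to $\Gamma_{n,\omega}$ near the origin via (iv) are all sound. No gaps.
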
 

\begin{remark}\label{rem:fundsol}
We observe that constructions similar to that of Theorem \ref{psper} have been used in \cite{Mu12} for a periodic fundamental solution of the Laplace equation, in \cite{LaMu18} for the Helmholtz equation, and in Luzzini \cite{Lu20} for the heat equation.
\end{remark}

Now that we have defined a periodic analog of the fundamental solution for the Lam\'e equations, we find  convenient to set
\[
\Gamma_{n,\omega}^{q,j}\equiv \bigl(\Gamma_{n,\omega,i}^{q,j}\bigr)_{i \in \{1,\dots,n\}}\,,\qquad
R_{n,\omega}^{q,j}\equiv \bigl(R_{n,\omega,i}^{q,j}\bigr)_{i \in \{1,\dots,n\}}\,,
\]
which we think as column vectors for all $j\in\{1,\dots,n\}$.  

We are now in the position to introduce the periodic single layer potential. To define it, it is sufficient to replace in the definition of the standard single layer potential for the Lam\'e equation the fundamental solution $\Gamma_{n,\omega}$ with its periodic analog $\Gamma^q_{n,\omega}$. So, if $\mu \in C^{0,\alpha}(\partial {\Omega_Q},\mathbb{R}^n)$, then we denote by $v_q[\omega, \mu]$ the periodic single layer potential, namely  the function from $\mathbb{R}^n$ to $\mathbb{R}^n$ defined by 
\[
v_q[\omega, \mu](x)\equiv \int_{\partial {\Omega_Q}}\Gamma^q_{n,\omega}(x-y)\mu(y)\,d\sigma_y \qquad \forall x \in \mathbb{R}^n\,.
\]
We note here that the fundamental solution $\Gamma^q_{n,\omega}$ takes values in $M_n(\mathbb{R})$ (cf.~Theorem \ref{psper} (ii) and (iii)). We also find convenient to set
\begin{equation}\label{eq:vqast}
W_{q}^\ast[\omega, \mu](x)\equiv \int_{\partial {\Omega_Q}}\sum_{l=1}^n \mu_{l}(y)T(\omega,D\Gamma_{n,\omega}^{q,l}(x-y))\nu_{{\Omega_Q}}(x)\,d\sigma_y \quad \forall x \in \partial {\Omega_Q}\,.
\end{equation}

In order to use the periodic single layer potential $v_q[\omega, \mu]$ to solve problem \eqref{bvp}, in the following theorem we present some properties of $v_q[\omega, \mu]$ (see \cite[Thm.~3.2]{DaMu14}).
\begin{theorem}
\label{sperpot}
The following statements hold.
\begin{enumerate}
\item[(i)] If $\mu\in C^{0,\alpha}(\partial{\Omega_Q},\mathbb{R}^n)$, then $v_{q}[\omega,\mu]$ is $q$-periodic and 
\[
L[\omega]v_{q}[\omega,\mu](x)
=
-\frac{1}{|Q|}\int_{\partial{\Omega_Q}}\mu \,d\sigma
\]
for all  $x\in 
 {\mathbb{R}}^{n}\setminus\partial{\mathbb{S}}[{\Omega_Q}]$.
\item[(ii)]  If $\mu\in C^{m-1,\alpha}(\partial{\Omega_Q},\mathbb{R}^n)$, then the function 
$v^{+}_{q}[\omega,\mu]\equiv v_{q}[\omega,\mu]_{|\overline{\mathbb{S}[\Omega_Q]}}$ belongs to $C^{m,\alpha}_{q}(\overline{\mathbb{S}[\Omega_Q]},\mathbb{R}^n)$ and the operator
 that takes $\mu$ to 
$v^{+}_{q}[\omega,\mu]  $ is  continuous from $C^{m-1,\alpha}(\partial{\Omega_Q},\mathbb{R}^n)$ to $C^{m,\alpha}_{q}(\overline{\mathbb{S}[{\Omega_Q}]},\mathbb{R}^n)$. 
\item[(iii)]  If $\mu\in C^{m-1,\alpha}(\partial{\Omega_Q},\mathbb{R}^n)$, then the function 
$v^{-}_{q}[\omega,\mu]\equiv v_{q}[\omega,\mu]_{|\overline{\mathbb{S}[\Omega_Q]^{-}}}$ belongs to $C^{m,\alpha}_{q}
(\overline{\mathbb{S}[\Omega_Q]^{-}},\mathbb{R}^n)$ and the operator that takes $\mu$ to $v^{-}_{q}[\omega,\mu]$   is  continuous from  $ C^{m-1,\alpha}(\partial{\Omega_Q},\mathbb{R}^n)$ to $C^{m,\alpha}_{q}
(\overline{\mathbb{S}[\Omega_Q]^{-}},\mathbb{R}^n)$.
\item[(iv)]  The operator that takes $\mu$ to $W_{q}^\ast[\omega,\mu]$ is  continuous from   $C^{m-1,\alpha}(\partial{\Omega_Q},\mathbb{R}^n)$ to itself, and we have 
\[%\begin{equation}\label{sperpot2}
T\bigl(\omega,Dv_{q}^{\pm}[\omega,\mu](x)\bigr)\nu_{{\Omega_Q}}(x)=\mp\frac{1}{2}\mu(x)+W_{q}^\ast[\omega,\mu](x) \qquad \forall x \in \partial{\Omega_Q}\, ,
\]%\end{equation}
for all $\mu \in C^{m-1,\alpha}(\partial{\Omega_Q},\mathbb{R}^n)$.
\item[(v)] We have 
\[
\int_{\partial {\Omega_Q}}W_{q}^\ast[\omega,\mu]\,d\sigma= \left(\frac{1}{2}-\frac{|{\Omega_Q}|}{|Q|}\right)\int_{\partial {\Omega_Q}}\mu\,d\sigma\,.
\]
\end{enumerate}
\end{theorem}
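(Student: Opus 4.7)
The overall strategy is to use the decomposition $\Gamma_{n,\omega}^q=\Gamma_{n,\omega}+R_{n,\omega}^q$ of Theorem \ref{psper}(iv) to reduce every statement to a combination of (a) classical (non-periodic) layer potential theory for the Lam\'e operator, which handles the singular behaviour near the boundary, and (b) estimates for the smooth convolution with the real analytic kernel $R_{n,\omega}^q$. Accordingly, I would write
\[
v_q[\omega,\mu](x)=v[\omega,\mu](x)+\int_{\partial\Omega_Q}R_{n,\omega}^q(x-y)\mu(y)\,d\sigma_y\,,
\]
where $v[\omega,\mu]$ denotes the classical single layer potential associated with $\Gamma_{n,\omega}$.

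For (i), the $q$-periodicity of $v_q[\omega,\mu]$ would follow from translating $x\mapsto x+qe_l$ under the integral sign and invoking Theorem \ref{psper}(i). To compute $L[\omega]v_q[\omega,\mu]$, I would differentiate under the integral and apply Theorem \ref{psper}(ii): for $x\notin\partial\mathbb{S}[\Omega_Q]$ none of the Dirac masses $\delta_{qz}$ hits $\partial\Omega_Q$, so only the constant term $-|Q|^{-1}I_n$ survives and gives the stated formula. For (ii) and (iii), the integral involving $R_{n,\omega}^q(x-y)$ defines a real analytic function of $x$ that is automatically $q$-periodic (by (i) of Theorem \ref{psper} applied to $R_{n,\omega}^q$), while the remaining term is the classical single layer potential of Lam\'e, whose $C^{m,\alpha}$-regularity on each side together with the continuity of the map $\mu\mapsto v^{\pm}[\omega,\mu]$ from $C^{m-1,\alpha}(\partial\Omega_Q,\mathbb{R}^n)$ into $C^{m,\alpha}(\overline{\Omega_Q},\mathbb{R}^n)$ and $C^{m,\alpha}_{\mathrm{loc}}(\mathbb{R}^n\setminus\Omega_Q,\mathbb{R}^n)$ is a standard result from classical elastic potential theory (see, for instance, Kupradze et al.~\cite{KuGeBaBu79}). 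Combining the two and using $q$-periodicity to pass from local bounds to global ones in $\overline{\mathbb{S}[\Omega_Q]^{-}}$ delivers the two continuity statements.

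Statement (iv) is the main technical point. The regular contribution from $R_{n,\omega}^q$ does not jump across $\partial\Omega_Q$: since $R_{n,\omega}^q$ is real analytic, both $x\mapsto\int_{\partial\Omega_Q}R_{n,\omega}^q(x-y)\mu(y)\,d\sigma_y$ and its gradient are continuous across $\partial\Omega_Q$, so this term can be absorbed into $W_q^*[\omega,\mu]$ without producing a jump. The jump is therefore the classical one for the traction of the single layer potential of $\Gamma_{n,\omega}$, namely $\mp\tfrac{1}{2}\mu(x)$, plus the principal value operator containing both the singular Lam\'e kernel and the smooth $R_{n,\omega}^q$-contribution; comparison with the definition \eqref{eq:vqast} of $W_q^*[\omega,\mu]$ shows that this is precisely the operator appearing in the statement. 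The hard part here is to justify the passage to the boundary of the normal component of the stress; I would either cite the corresponding classical Lam\'e result or reproduce the standard argument based on cutting out a small tubular neighbourhood, integration by parts, and dominated convergence.

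For (v), I would combine (i) and (iv) with the divergence theorem on $\Omega_Q$. Since $\mathrm{div}\,T(\omega,Dv_q^{+}[\omega,\mu])=L[\omega]v_q^{+}[\omega,\mu]=-|Q|^{-1}\int_{\partial\Omega_Q}\mu\,d\sigma$ in $\Omega_Q$ by (i), integration gives
\[
\int_{\partial\Omega_Q}T(\omega,Dv_q^{+}[\omega,\mu])\nu_{\Omega_Q}\,d\sigma=-\frac{|\Omega_Q|}{|Q|}\int_{\partial\Omega_Q}\mu\,d\sigma\,.
\]
On the other hand (iv) yields that this same integral equals $-\tfrac{1}{2}\int_{\partial\Omega_Q}\mu\,d\sigma+\int_{\partial\Omega_Q}W_q^\ast[\omega,\mu]\,d\sigma$. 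Equating the two expressions and solving for the integral of $W_q^\ast[\omega,\mu]$ gives the stated identity. No additional regularity input beyond (i) and (iv) is needed for this last step.
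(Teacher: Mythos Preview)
Your proposal is correct and follows exactly the standard route: decompose $\Gamma_{n,\omega}^q=\Gamma_{n,\omega}+R_{n,\omega}^q$, push all singular behaviour onto the classical Lam\'e layer potentials, and treat the $R_{n,\omega}^q$-part as a smooth convolution; then derive (v) from (i) and (iv) via the divergence theorem on $\Omega_Q$. The paper itself does not prove this theorem but simply quotes it from \cite[Thm.~3.2]{DaMu14}, where precisely this decomposition-based argument is carried out, so your approach coincides with the intended one.
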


To solve problem \eqref{bvp} we will need to exploit some properties of the auxiliary operator $\frac{1}{2}I +W_{q}^\ast[\omega,\cdot]$. Therefore, we recall the following result of  \cite[Prop.~3.4]{DaMu14}.

\begin{proposition}\label{prop:bijbdry}
The operator $\frac{1}{2}I +W_{q}^\ast[\omega,\cdot]$ is a linear homeomorphism from the space $C^{m-1,\alpha}(\partial {\Omega_Q},\mathbb{R}^n)$ to itself.
\end{proposition}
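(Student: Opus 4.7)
The plan is to deduce the result via the Fredholm alternative: I would show first that $\frac{1}{2}I + W_q^{\ast}[\omega,\cdot]$ is a Fredholm operator of index zero on $C^{m-1,\alpha}(\partial \Omega_Q, \mathbb{R}^n)$, and then establish its injectivity by means of a uniqueness argument for an associated periodic boundary value problem for $L[\omega]$. Continuity of the operator is already provided by Theorem~\ref{sperpot}(iv).

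For the Fredholm property I would exploit the splitting $\Gamma_{n,\omega}^{q} = \Gamma_{n,\omega} + R_{n,\omega}^{q}$ of Theorem~\ref{psper}(iv) to decompose $W_q^{\ast}[\omega,\cdot] = W^{\ast}[\omega,\cdot] + K[\omega,\cdot]$, where $W^{\ast}[\omega,\cdot]$ is the classical non-periodic boundary integral operator associated with the single layer potential for $L[\omega]$ and $K[\omega,\cdot]$ is the integral operator whose kernel is built from derivatives of the real analytic function $R_{n,\omega}^{q}$. Smoothness of the kernel implies that $K[\omega,\cdot]$ is compact from $C^{m-1,\alpha}(\partial\Omega_Q,\mathbb{R}^n)$ into itself, and the classical theory of elastic boundary integral operators (see, e.g., Kupradze {\it et al.}~\cite{KuGeBaBu79}) tells us that $\frac{1}{2}I + W^{\ast}[\omega,\cdot]$ is Fredholm of index zero on that space. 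Since the index is preserved under compact perturbations, so is $\frac{1}{2}I + W_q^{\ast}[\omega,\cdot]$.

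To prove injectivity, suppose $\mu \in C^{m-1,\alpha}(\partial \Omega_Q, \mathbb{R}^n)$ satisfies $\frac{1}{2}\mu + W_q^{\ast}[\omega,\mu]=0$. Integration over $\partial\Omega_Q$ combined with Theorem~\ref{sperpot}(v) yields $\bigl(1 - |\Omega_Q|/|Q|\bigr)\int_{\partial\Omega_Q} \mu\,d\sigma = 0$, and since $|\Omega_Q|<|Q|$ this forces $\int_{\partial\Omega_Q}\mu\,d\sigma=0$. By Theorem~\ref{sperpot}(i) the function $u \equiv v_q^{-}[\omega,\mu]$ then solves $L[\omega]u = 0$ in $\mathbb{S}[\Omega_Q]^{-}$, is $q$-periodic, and satisfies $T(\omega,Du)\nu_{\Omega_Q} = \frac{1}{2}\mu + W_q^{\ast}[\omega,\mu]=0$ on $\partial\Omega_Q$ by Theorem~\ref{sperpot}(iv). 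Applying a first Betti identity on $Q\setminus\overline{\Omega_Q}$, with the contributions on opposite faces of $\partial Q$ cancelling by $q$-periodicity and those on $\partial\Omega_Q$ vanishing by the Neumann condition, forces the elastic strain energy of $u$ to be zero; hence $u$ is a rigid motion, and $q$-periodicity excludes any non-constant rigid motion, so $u$ is constant. Then $v_q^{+}[\omega,\mu]$ solves $L[\omega]v=0$ in $\Omega_Q$ with constant Dirichlet data (by continuity of the single layer potential across $\partial\Omega_Q$), and interior Dirichlet uniqueness forces $v_q^{+}[\omega,\mu]$ constant as well. The jump formula of Theorem~\ref{sperpot}(iv) now yields $\mu = T(\omega,Dv_q^{-}[\omega,\mu])\nu_{\Omega_Q} - T(\omega,Dv_q^{+}[\omega,\mu])\nu_{\Omega_Q} = 0$, as required.

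The main obstacle is the Fredholm property of $\frac{1}{2}I + W^{\ast}[\omega,\cdot]$ for the classical single layer operator of Lam\'e on Schauder spaces, which must be imported from classical potential theory: the operator is singular and not compact on its own, so a routine Arzel\`a--Ascoli argument does not suffice. A subtler but manageable point is the cancellation of the $\partial Q$ contributions in the Betti identity, which is precisely where the $q$-periodicity of $u$ enters the argument crucially.
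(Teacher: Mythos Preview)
The paper does not actually prove Proposition~\ref{prop:bijbdry}: it is stated there as a result \emph{recalled} from \cite[Prop.~3.4]{DaMu14}, with no argument given. So there is no proof in the paper to compare your proposal against.

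That said, your strategy is the standard one and is essentially correct. Splitting $\Gamma_{n,\omega}^{q}=\Gamma_{n,\omega}+R_{n,\omega}^{q}$ to write $W_q^\ast=W^\ast+K$ with $K$ having a smooth kernel, and then importing the Fredholm property of $\tfrac12 I+W^\ast$ from classical elastic potential theory, is exactly how one reduces the periodic case to the non-periodic one; this is indeed the route taken in \cite{DaMu14}. Your injectivity argument is also sound: the integral identity from Theorem~\ref{sperpot}(v) forces $\int_{\partial\Omega_Q}\mu\,d\sigma=0$, so that $v_q^{-}[\omega,\mu]$ genuinely solves $L[\omega]u=0$ (cf.~Theorem~\ref{sperpot}(i)); the energy argument on $Q\setminus\overline{\Omega_Q}$ with the periodic cancellation on $\partial Q$ is precisely the mechanism used in the proof of Proposition~\ref{prop:uneu} in the paper, and the passage from ``zero strain energy'' to ``affine with skew-symmetric linear part'' to ``constant by periodicity'' mirrors that proof verbatim. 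The final step via the interior Dirichlet problem and the traction jump is clean.

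The one point you correctly flag as delicate---that $\tfrac12 I+W^\ast[\omega,\cdot]$ is Fredholm of index zero on $C^{m-1,\alpha}(\partial\Omega_Q,\mathbb{R}^n)$---is genuinely nontrivial for the Lam\'e system, since the elastic boundary operators are singular integral operators rather than weakly singular ones. You would need to cite this carefully (e.g.\ from \cite{KuGeBaBu79} or the Schauder-space results used in \cite{DaLa10a}), but it is a known fact and not a gap in your argument.
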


In the following proposition, we recall the representation formula of  \cite[Prop.~3.5]{DaMu14} for a periodic function $u$ defined on the set $\overline{\mathbb{S}[{\Omega_Q}]^{-}}$ and such that $L[\omega]u=0$. To do so we need to introduce the following set of functions with zero integral on $\partial \Omega_Q$:
\[
C^{m-1,\alpha}(\partial {\Omega_Q},\mathbb{R}^n)_0\equiv \left\{f \in C^{m-1,\alpha}(\partial {\Omega_Q},\mathbb{R}^n)\colon \int_{\partial {\Omega}}f \, d\sigma=0\right\}\,.
\]

Then we are in the position to formulate the following result, which states that any function $u \in C^{m,\alpha}_q(\overline{\mathbb{S}[{\Omega_Q}]^{-}},\mathbb{R}^n)$ such that $L[\omega]u(x)=0$ for all $x \in \mathbb{S}[{\Omega_Q}]^-$ can be represented as the sum of a periodic single layer potential and a constant.

\begin{proposition}\label{prop:rep}
Let $u \in C^{m,\alpha}_q(\overline{\mathbb{S}[{\Omega_Q}]^{-}},\mathbb{R}^n)$. Assume that
\[
L[\omega]u(x)=0 \qquad \forall x \in \mathbb{S}[{\Omega_Q}]^-\,.
\]
Then there exists a unique pair $(\mu,c) \in C^{m-1,\alpha}(\partial {\Omega_Q},\mathbb{R}^n)_0\times \mathbb{R}^n$ such that
\[%\begin{equation}\label{eq:rep1}
u(x)=v^-_q[\omega,\mu](x)+c \qquad \forall x \in \overline{\mathbb{S}[{\Omega_Q}]^{-}}\,.
\]%\end{equation}
\end{proposition}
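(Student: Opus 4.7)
The plan is to reduce the representation to a boundary integral equation, solve it by appealing to Proposition \ref{prop:bijbdry}, and then show by a rigidity argument that the residual $u - v_q^-[\omega,\mu]$ must be a constant vector. Concretely, I set $g \equiv T(\omega,Du)\nu_{\Omega_Q}$ on $\partial\Omega_Q$, which belongs to $C^{m-1,\alpha}(\partial\Omega_Q,\mathbb{R}^n)$ by the regularity of $u$, and I look for $\mu$ forcing $v_q^-[\omega,\mu]$ to share this boundary traction. By the jump relation of Theorem \ref{sperpot}(iv) this is exactly the equation $(\tfrac{1}{2}I + W_q^\ast[\omega,\cdot])\mu = g$, which has a unique solution $\mu \in C^{m-1,\alpha}(\partial\Omega_Q,\mathbb{R}^n)$ by Proposition \ref{prop:bijbdry}.

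Next I verify that $\mu$ lies in the zero-average subspace $C^{m-1,\alpha}(\partial\Omega_Q,\mathbb{R}^n)_0$. Integrating the identity $(\tfrac{1}{2}I + W_q^\ast[\omega,\cdot])\mu = g$ over $\partial\Omega_Q$ and using Theorem \ref{sperpot}(v) gives $\int_{\partial\Omega_Q} g\,d\sigma = \frac{|Q|-|\Omega_Q|}{|Q|}\int_{\partial\Omega_Q}\mu\,d\sigma$, so it suffices to show that $\int_{\partial\Omega_Q} g\,d\sigma = 0$. This follows from the divergence theorem applied to $\mathrm{div}\,T(\omega,Du) = L[\omega]u = 0$ on the bounded cell complement $Q\setminus\overline{\Omega_Q}$: the $q$-periodicity of $Du$ makes the contributions from pairs of opposite faces of $\partial Q$ cancel, so the only surviving boundary term lives on $\partial\Omega_Q$ and must vanish.

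The main work is the rigidity step for $w \equiv u - v_q^-[\omega,\mu]$. By Theorem \ref{sperpot}(i) together with $\int\mu\,d\sigma = 0$, the function $w$ is $q$-periodic, satisfies $L[\omega]w = 0$ on $\mathbb{S}[\Omega_Q]^-$, and by construction $T(\omega,Dw)\nu_{\Omega_Q}=0$ on $\partial\Omega_Q$. I would test the Lam\'e equation against $w$ and integrate by parts on $Q\setminus\overline{\Omega_Q}$: the boundary contributions on opposite faces of $\partial Q$ cancel by the $q$-periodicity of $w$ and $T(\omega,Dw)$, while the contribution on $\partial\Omega_Q$ vanishes by the traction condition, leaving
\begin{equation*}
\int_{Q\setminus\overline{\Omega_Q}} T(\omega,Dw):Dw\,dx = 0.
\end{equation*}
A direct algebraic identity rewrites the integrand as $(\omega-1)(\mathrm{tr}\,Dw)^2 + 2|e(w)|^2$ with $e(w) \equiv \tfrac{1}{2}(Dw+Dw^t)$, and the pointwise bound $(\mathrm{tr}\,Dw)^2 \leq n|e(w)|^2$ makes this at least $(n\omega + 2 - n)|e(w)|^2$, which is a strictly positive multiple of $|e(w)|^2$ precisely in the range $\omega > 1-2/n$. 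Therefore $e(w)\equiv 0$ on the connected set $\mathbb{S}[\Omega_Q]^-$, so $w$ is a rigid motion $a+Bx$ with $B$ skew-symmetric; $q$-periodicity forces $Bq=0$ hence $B=0$, and $w$ is a constant vector $c\in\mathbb{R}^n$.

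Uniqueness is then immediate: if $v_q^-[\omega,\mu_1]+c_1 = v_q^-[\omega,\mu_2]+c_2$, applying $T(\omega,D\cdot)\nu_{\Omega_Q}$ and Theorem \ref{sperpot}(iv) yields $(\tfrac{1}{2}I+W_q^\ast[\omega,\cdot])(\mu_1-\mu_2) = 0$, so $\mu_1=\mu_2$ by Proposition \ref{prop:bijbdry}, and then $c_1 = c_2$. The main obstacle is the rigidity step: one needs a sharp positive lower bound for $T(\omega,Dw):Dw$ that is valid for every $\omega \in\,]1-2/n,+\infty[$, and this is exactly where the hypothesis on $\omega$ genuinely enters. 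The remaining steps are routine given the material of Section \ref{prel}.
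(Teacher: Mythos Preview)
The paper does not actually prove Proposition~\ref{prop:rep}; it merely recalls it from \cite[Prop.~3.5]{DaMu14}. So there is no in-paper proof to compare against. That said, your argument is essentially correct and is the natural one: solve the exterior traction equation $(\tfrac{1}{2}I+W_q^\ast[\omega,\cdot])\mu=T(\omega,Du)\nu_{\Omega_Q}$ via Proposition~\ref{prop:bijbdry}, check $\int_{\partial\Omega_Q}\mu\,d\sigma=0$ using Theorem~\ref{sperpot}(v) and the divergence theorem, and then run an energy/rigidity argument on the residual $w=u-v_q^-[\omega,\mu]$. This last step is exactly the computation the paper itself performs in the proof of Proposition~\ref{prop:uneu} (there invoking \cite[Prop.~2.1]{DaLa10a}), so your approach is fully in line with the techniques of the paper.

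One small correction: your claimed lower bound $(\omega-1)(\mathrm{tr}\,Dw)^2+2|e(w)|^2\geq (n\omega+2-n)|e(w)|^2$ is obtained by multiplying the inequality $(\mathrm{tr}\,Dw)^2\leq n|e(w)|^2$ by $(\omega-1)$, which only goes the right way when $\omega\leq 1$. For $\omega>1$ the inequality reverses and your stated bound is false in general (take $e(w)$ trace-free). The fix is trivial---for $\omega\geq 1$ the term $(\omega-1)(\mathrm{tr}\,Dw)^2$ is already nonnegative, so the integrand is $\geq 2|e(w)|^2$---but as written the argument only covers $\omega\in\,]1-2/n,1]$. Splitting into the two cases $\omega\lessgtr 1$ closes the gap. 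Everything else is sound.
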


\section{Existence and uniqueness for the solution of problem (\ref{bvp})}
\label{exun}

In this section, we prove existence and uniqueness for the solution of problem \eqref{bvp}. First, we start with this uniqueness result for the solution of an homogeneous Robin problem with periodicity condition (\textit{i.e.}, with $B=0$).

\begin{proposition}\label{prop:uneu}
 Let $u \in  C^{m,\alpha}_q(\overline{\mathbb{S}[\Omega_Q]^{-}},\mathbb{R}^n)$ be such that
 \begin{equation}\label{bvp:uneu}
 \left \lbrace 
 \begin{array}{ll}
L[\omega]u = 0 & \mathrm{in}\  {\mathbb{S}} [\Omega_Q]^-\,, \\
u(x+qe_j) =u(x) &  \textrm{$\forall x \in \overline{\mathbb{S}[\Omega_Q]^{-}}, \forall j\in \{1,\dots,n\}$}, \\
a(x)T(\omega,Du(x))\nu_{\Omega_Q}(x)+ b(x)u(x)=0 & \textrm{$\forall x \in \partial \Omega_Q$}\,.
 \end{array}
 \right.
 \end{equation}
Then $u(x)=0$ for all $x \in \overline{\mathbb{S}[\Omega_Q]^{-}}$.
\end{proposition}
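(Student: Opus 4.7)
The plan is to carry out a Betti-type energy argument on the fundamental cell of the exterior periodic domain, namely $Q\setminus\overline{\Omega_Q}$. Since $L[\omega]u=0$ in $\mathbb{S}[\Omega_Q]^-$, integration by parts against $u$ yields
\[
\int_{Q\setminus\overline{\Omega_Q}} T(\omega,Du):Du\,dx = \int_{\partial Q}\bigl(T(\omega,Du)\nu_Q\bigr)\cdot u\,d\sigma - \int_{\partial\Omega_Q}\bigl(T(\omega,Du)\nu_{\Omega_Q}\bigr)\cdot u\,d\sigma,
\]
the minus sign in the last term reflecting that $\nu_{\Omega_Q}$ points inward with respect to $Q\setminus\overline{\Omega_Q}$. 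By the $q$-periodicity of $u$ and hence of $T(\omega,Du)$, the contributions of opposite faces of $\partial Q$ cancel, so the first boundary integral vanishes. Using the hypothesis $\det a(x)\neq 0$ to invert the Robin condition into $T(\omega,Du)\nu_{\Omega_Q}=-a^{-1}bu$ and substituting, the identity becomes
\[
\int_{Q\setminus\overline{\Omega_Q}} T(\omega,Du):Du\,dx = \int_{\partial\Omega_Q} u^t a^{-1}(x)b(x)u\,d\sigma\leq 0,
\]
by the pointwise sign hypothesis on $\xi^t a^{-1}b\xi$.

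Next I would invoke the ellipticity of $T(\omega,\cdot)$ in the range $\omega>1-2/n$: decomposing the strain $\varepsilon(u)\equiv\frac{1}{2}(Du+(Du)^t)$ into its trace and deviatoric parts one computes
\[
T(\omega,Du):Du=\bigl(\omega-1+\tfrac{2}{n}\bigr)(\mathrm{div}\,u)^2+2\bigl|\varepsilon(u)-\tfrac{1}{n}(\mathrm{div}\,u)I_n\bigr|^2,
\]
which is pointwise nonnegative and vanishes only when $\varepsilon(u)=0$. Combined with the previous inequality this forces $\varepsilon(u)\equiv 0$ on $Q\setminus\overline{\Omega_Q}$, and then on all of $\mathbb{S}[\Omega_Q]^-$ by periodicity.

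The classical characterization of the kernel of the symmetric gradient on a connected open set (and $\mathbb{S}[\Omega_Q]^-$ is connected as recalled in Section \ref{not}) then gives $u(x)=Ax+c$ for some antisymmetric $A\in M_n(\mathbb{R})$ and $c\in\mathbb{R}^n$. The periodicity condition $u(x+qe_j)=u(x)$ forces $Aqe_j=0$ for all $j\in\{1,\dots,n\}$, and hence $A=0$ by invertibility of $q$, so $u\equiv c$. Plugging this back into the Robin condition yields $b(x)c=0$ for every $x\in\partial\Omega_Q$; evaluating at the point $x_0$ with $\det b(x_0)\neq 0$ produces $c=0$, as desired.

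The main delicate point is the algebraic identity for $T(\omega,Du):Du$ and the observation that the sharp range $\omega>1-2/n$ is exactly what turns this quadratic form into something positive definite on the strain; once that pointwise ellipticity is in hand, the rest reduces to careful bookkeeping of outward normals and to a direct exploitation of the algebraic hypotheses on $a$ and $b$ stated in the problem.
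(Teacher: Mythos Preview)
Your proof is correct and follows essentially the same route as the paper's: a Betti energy identity on $Q\setminus\overline{\Omega_Q}$, cancellation of the $\partial Q$ contribution by periodicity, the sign condition on $a^{-1}b$ to force the elastic energy to vanish, the rigid-motion characterization of the kernel of $\varepsilon$, periodicity to kill the skew-symmetric part, and finally $\det b(x_0)\neq 0$ to kill the constant. The only difference is cosmetic: where the paper cites \cite[Proposition~2.1]{DaLa10a} for the nonnegativity of $\mathrm{tr}\bigl(T(\omega,Du)D^tu\bigr)$ and the ensuing rigid-motion conclusion, you supply the explicit trace/deviatoric decomposition that makes the role of the threshold $\omega>1-2/n$ transparent.
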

\begin{proof} 
 By the periodicity of $u$ we have $\int_{\partial Q}u^tT(\omega,Du)\nu_Q\,d\sigma=0$. Moreover, the third condition in \eqref{bvp:uneu} implies that
\[
T(\omega,Du(x))\nu_{\Omega_Q}(x)=- a^{-1}(x)b(x)u(x)  \qquad \textrm{$\forall x \in \partial \Omega_Q$.}
\]
Thus  the Divergence Theorem implies that
\[
0 \leq \int_{ Q\setminus \overline{\Omega_Q}}\mathrm{tr} \bigl(T(\omega,Du)D^tu\bigr)\,dx=-\int_{\partial {\Omega_Q}}u^tT(\omega,Du)\nu_{\Omega_Q} \,d\sigma=\int_{\partial {\Omega_Q}}u^ta^{-1}bu  \,d\sigma \leq 0\,,
\]
since, by assumption on $a, b$, we have
\[
u^t(x)a^{-1}(x)b(x)u(x)\leq 0 \qquad \forall x \in \partial \Omega_Q\, .
\]
Then $\mathrm{tr} \bigl(T(\omega,Du)D^tu\bigr)=0$ in $Q\setminus \overline{\Omega_Q}$, and by arguing as in \cite[Proposition 2.1]{DaLa10a}, we can prove that there exist a skew symmetric matrix $A \in M_{n}(\mathbb{R})$ and $c \in \mathbb{R}^n$, such that
\[
u(x)=Ax+c \qquad \forall x \in \overline{Q}\setminus \overline{\Omega_Q}\,.
\]
By the periodicity of $u$, we have
\[
A qe_k=u(qe_k)-u(0)=0 \qquad \forall k \in \{1,\dots,n\}\,.
\]
Accordingly, $A=0$. Hence, $u(x)=c$ for all $x \in \overline{Q}\setminus \overline{\Omega_Q}$, and thus, by periodicity, $u(x)=c$ for all $x \in \overline{\mathbb{S}[\Omega_Q]^{-}}$. Then again by the third condition in \eqref{bvp:uneu} we deduce that
\[
0+b(x)c=0 \qquad \forall x \in \partial \Omega_Q\, ,
\]
and thus, in particular
\[
b(x_0)c=0\, ,
\]
which together with $\mathrm{det}\, b(x_0)\neq 0$ implies $c=0$.
\end{proof}

In Proposition~\ref{prop:bij} below, we show that the operator
\[
(\mu,c)\mapsto \frac{1}{2}\mu +W_{q}^\ast[\omega,\mu]_{|\partial \Omega_Q}+a^{-1}b\Big(v_{q}[\omega, \mu]+c\Big)
\]
is a linear homeomorphism from $C^{m-1,\alpha}(\partial\Omega_Q,\mathbb{R}^n)_0 \times \mathbb{R}^n$ to $C^{m-1,\alpha}(\partial\Omega_Q,\mathbb{R}^n)$ (cf.~definition \eqref{eq:vqast}). As we shall see, such operator appears if we want to solve a periodic Robin boundary value problem for the Lam\'e equations in terms of a single layer potential plus a costant. However, before proving Proposition~\ref{prop:bij}, we need the following intermediate step, which is the periodic counterpart of \cite[Prop.~4.4]{DaMu14}.

\begin{lemma}\label{lem:bijD}
Let $D\equiv (d_{ij}(\cdot))_{(i,j) \in \{1,\dots, n\}^2} \in C^{m-1,\alpha}(\partial \Omega_Q,M_{n}(\mathbb{R}))$ be such that the matrix
\[
\int_{\partial \Omega_Q}D(y)\,d\sigma_y \equiv \Biggl(\int_{\partial \Omega_Q}d_{ij}(y)\,d\sigma_y\Biggr)_{(i,j)\in \{1,\dots,n\}^2}
\]
is invertible. Then the operator from $C^{m-1,\alpha}(\partial \Omega_Q,\mathbb{R}^n)_0\times \mathbb{R}^n$ to $C^{m-1,\alpha}(\partial \Omega_Q,\mathbb{R}^n)$ that takes $(\mu,c)$ to the function
\[%\begin{equation}\label{eq:bijD}
\frac{1}{2}\mu+W_{q}^\ast[\omega,\mu]+D c
\]%\end{equation}
is a linear homeomorphism.
\end{lemma}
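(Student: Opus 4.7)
The plan is to denote the operator in question by $\Lambda\colon C^{m-1,\alpha}(\partial\Omega_Q,\mathbb{R}^n)_0\times \mathbb{R}^n \to C^{m-1,\alpha}(\partial\Omega_Q,\mathbb{R}^n)$, verify it is linear and continuous, establish its bijectivity, and finally invoke the open mapping theorem. Continuity of $\Lambda$ is immediate from Theorem \ref{sperpot}(iv) and the fact that pointwise multiplication by $D\in C^{m-1,\alpha}(\partial\Omega_Q,M_n(\mathbb{R}))$ is continuous on $C^{m-1,\alpha}(\partial\Omega_Q,\mathbb{R}^n)$. The source space is a Banach space since $C^{m-1,\alpha}(\partial\Omega_Q,\mathbb{R}^n)_0$ is the kernel of the continuous linear functional $f\mapsto \int_{\partial\Omega_Q}f\,d\sigma$, hence closed.

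For injectivity, I would suppose $\Lambda(\mu,c)=0$ and integrate over $\partial\Omega_Q$. Using $\int_{\partial\Omega_Q}\mu\,d\sigma = 0$ together with the identity $\int_{\partial\Omega_Q}W_{q}^\ast[\omega,\mu]\,d\sigma = \bigl(\tfrac{1}{2}-|\Omega_Q|/|Q|\bigr)\int_{\partial\Omega_Q}\mu\,d\sigma$ from Theorem \ref{sperpot}(v), both of the first two terms vanish, leaving $\bigl(\int_{\partial\Omega_Q}D\,d\sigma\bigr)c = 0$. Invertibility of $\int_{\partial\Omega_Q}D\,d\sigma$ then forces $c=0$, and the equation reduces to $\bigl(\tfrac{1}{2}I+W_{q}^\ast[\omega,\cdot]\bigr)\mu = 0$, whence $\mu = 0$ by Proposition \ref{prop:bijbdry}.

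For surjectivity, given $f\in C^{m-1,\alpha}(\partial\Omega_Q,\mathbb{R}^n)$, the integration computation above suggests first setting
\[
c\equiv \Bigl(\int_{\partial\Omega_Q}D\,d\sigma\Bigr)^{-1}\int_{\partial\Omega_Q}f\,d\sigma,
\]
so that $\int_{\partial\Omega_Q}(f-Dc)\,d\sigma = 0$, and then applying Proposition \ref{prop:bijbdry} to obtain a unique $\mu \in C^{m-1,\alpha}(\partial\Omega_Q,\mathbb{R}^n)$ such that $\bigl(\tfrac{1}{2}I+W_{q}^\ast[\omega,\cdot]\bigr)\mu = f - Dc$. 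The only slightly delicate point, and the main obstacle, is to check that this $\mu$ actually lies in $C^{m-1,\alpha}(\partial\Omega_Q,\mathbb{R}^n)_0$. Integrating the defining identity and using Theorem \ref{sperpot}(v) once more yields
\[
\Bigl(1-\frac{|\Omega_Q|}{|Q|}\Bigr)\int_{\partial\Omega_Q}\mu\,d\sigma = \int_{\partial\Omega_Q}(f-Dc)\,d\sigma = 0,
\]
and the prefactor is nonzero because $\overline{\Omega_Q}\subseteq Q$ implies $|\Omega_Q|<|Q|$. Hence $\int_{\partial\Omega_Q}\mu\,d\sigma = 0$, $\Lambda$ is bijective, and the open mapping theorem delivers the homeomorphism property. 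The crux of the argument is really bookkeeping with the integral constraint, which Theorem \ref{sperpot}(v) and the strict inclusion of $\Omega_Q$ in $Q$ resolve cleanly.
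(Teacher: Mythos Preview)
Your proof is correct and follows essentially the same approach as the paper: integrate the equation over $\partial\Omega_Q$, use Theorem \ref{sperpot}(v) to isolate and determine $c$, then invoke Proposition \ref{prop:bijbdry} for $\mu$, and finally check $\mu\in C^{m-1,\alpha}(\partial\Omega_Q,\mathbb{R}^n)_0$ via the nonvanishing factor $1-|\Omega_Q|/|Q|$. The only cosmetic difference is that you separate injectivity and surjectivity explicitly, whereas the paper packages both as a single uniqueness-then-existence argument for the equation $\mathcal{H}[\mu,c]=\psi$.
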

\begin{proof} We start by denoting by $\mathcal{H}$ the linear operator from $C^{m-1,\alpha}(\partial \Omega_Q,\mathbb{R}^n)_0\times \mathbb{R}^n$ to $C^{m-1,\alpha}(\partial \Omega_Q,\mathbb{R}^n)$ defined by
\[
\mathcal{H}[\mu,c]\equiv \frac{1}{2}\mu+W_{q}^\ast[\omega,\mu]+D c\, ,
\] 
for all $(\mu,c) \in C^{m-1,\alpha}(\partial \Omega_Q,\mathbb{R}^n)_0\times \mathbb{R}^n$. By Theorem \ref{sperpot}, $\mathcal{H}$ is a linear and continuous operator from  $C^{m-1,\alpha}(\partial \Omega_Q,\mathbb{R}^n)_0\times \mathbb{R}^n$ to $C^{m-1,\alpha}(\partial \Omega_Q,\mathbb{R}^n)$. To prove the lemma, we need to show that $\mathcal{H}$ is a linear homeomorphism. Thus, by the Open Mapping Theorem, it suffices to prove that it is a bijection. So let $\psi \in C^{m-1,\alpha}(\partial \Omega_Q,\mathbb{R}^n)$. We need to prove that there exists a unique pair $(\mu,c) \in  C^{m-1,\alpha}(\partial \Omega_Q,\mathbb{R}^n)_0\times \mathbb{R}^n$ such that
\begin{equation}\label{eq:bijD1}
\frac{1}{2}\mu(x)+W_{q}^\ast[\omega,\mu](x)+D(x) c=\psi(x) \qquad \forall x \in \partial \Omega_Q\,.
\end{equation}
We first prove uniqueness. Let us assume that the pair $(\mu,c) \in  C^{m-1,\alpha}(\partial \Omega_Q,\mathbb{R}^n)_0\times \mathbb{R}^n$ solves equation \eqref{eq:bijD1}. By integrating both sides of equation \eqref{eq:bijD1}, and by the identity
\begin{equation}\label{eq:bijD0}
\int_{\partial \Omega_Q}\Bigl(\frac{1}{2}\mu(x)+W_{q}^\ast[\omega,\mu](x)\Bigr)\,d\sigma_x= \left(1-\frac{|{\Omega_Q}|}{|Q|}\right)\int_{\partial \Omega_Q}\mu(x)\,d\sigma_x
\end{equation}
(cf. Theorem \ref{sperpot} (v)), we obtain
\[
\Bigl(\int_{\partial \Omega_Q}D(x)\,d\sigma_x\Bigr)c=\int_{\partial \Omega_Q}\psi(x)\,d\sigma_x\,,
\]
and thus
\begin{equation}\label{eq:bijD2}
c=\Bigl(\int_{\partial \Omega_Q}D(x)\,d\sigma_x\Bigr)^{-1}\int_{\partial \Omega_Q}\psi(x)\,d\sigma_x\,.
\end{equation}
As a consequence, by Proposition \ref{prop:bijbdry}, $\mu$ is the unique solution in $C^{m-1,\alpha}(\partial \Omega_Q,\mathbb{R}^n)$ of equation
\begin{equation}\label{eq:bijD3}
\frac{1}{2}\mu(x)+W_{q}^\ast[\omega,\mu](x)=\psi(x)-D(x) \Bigl(\int_{\partial \Omega_Q}D(y)\,d\sigma_y\Bigr)^{-1}\int_{\partial \Omega_Q}\psi(y)\,d\sigma_y \qquad \forall x \in \partial \Omega_Q\, .
\end{equation}
We also note that by equality \eqref{eq:bijD0} the unique solution of equation \eqref{eq:bijD3} is in the space $C^{m-1,\alpha}(\partial \Omega_Q,\mathbb{R}^n)_0$. Hence uniqueness follows. In order to prove existence, it suffices to observe that the pair $(\mu,c) \in  C^{m-1,\alpha}(\partial \Omega_Q,\mathbb{R}^n)_0\times \mathbb{R}^n$ identified by equations \eqref{eq:bijD2}, \eqref{eq:bijD3} solves equation \eqref{eq:bijD1} (cf. Proposition \ref{prop:bijbdry}).
\end{proof}

We are now ready to prove the following.

\begin{proposition}\label{prop:bij}
The operator from $C^{m-1,\alpha}(\partial {\Omega_Q},\mathbb{R}^n)_0 \times \mathbb{R}^n$ to  $C^{m-1,\alpha}(\partial {\Omega_Q},\mathbb{R}^n)$ that takes a pair $(\mu,c)$ to 
\begin{equation}\label{eq:bij1}
\frac{1}{2}\mu +W_{q}^\ast[\omega,\mu]+a^{-1}b\Big(v_{q}[\omega, \mu]_{|\partial \Omega_Q}+c\Big)
\end{equation}
 is a linear homeomorphism.
\end{proposition}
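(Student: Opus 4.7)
The plan is to write the operator in \eqref{eq:bij1} as a compact perturbation of the homeomorphism from Lemma \ref{lem:bijD} and then use Proposition \ref{prop:uneu} to establish injectivity. Surjectivity will follow from the Fredholm alternative.

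More concretely, I would define
\[
\mathcal{T}[\mu,c] \equiv \tfrac{1}{2}\mu + W_{q}^{\ast}[\omega,\mu] + a^{-1}b\bigl(v_{q}[\omega,\mu]_{|\partial\Omega_Q}+c\bigr),
\]
and split it as $\mathcal{T} = \mathcal{H} + \mathcal{K}$, where
\[
\mathcal{H}[\mu,c]\equiv \tfrac{1}{2}\mu+W_{q}^{\ast}[\omega,\mu]+(a^{-1}b)\,c,\qquad \mathcal{K}[\mu,c]\equiv a^{-1}b\,v_{q}[\omega,\mu]_{|\partial\Omega_Q}.
\]
Lemma \ref{lem:bijD} with $D\equiv a^{-1}b$ applies, because the hypothesis $\det\int_{\partial\Omega_Q}a^{-1}b\,d\sigma\neq 0$ gives exactly the required invertibility; hence $\mathcal{H}$ is a linear homeomorphism. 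On the other hand, by Theorem \ref{sperpot}(ii)--(iii) the map $\mu\mapsto v_{q}[\omega,\mu]_{|\partial\Omega_Q}$ is continuous from $C^{m-1,\alpha}(\partial\Omega_Q,\mathbb{R}^n)$ to $C^{m,\alpha}(\partial\Omega_Q,\mathbb{R}^n)$, and the inclusion $C^{m,\alpha}(\partial\Omega_Q,\mathbb{R}^n)\hookrightarrow C^{m-1,\alpha}(\partial\Omega_Q,\mathbb{R}^n)$ is compact; multiplying by the fixed matrix-valued coefficient $a^{-1}b\in C^{m-1,\alpha}(\partial\Omega_Q,M_{n}(\mathbb{R}))$ preserves compactness, so $\mathcal{K}$ is a compact operator. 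Consequently $\mathcal{T}=\mathcal{H}+\mathcal{K}$ is Fredholm of index zero, and by the open mapping theorem it suffices to prove that $\mathcal{T}$ is injective.

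For injectivity, suppose $(\mu,c)\in C^{m-1,\alpha}(\partial\Omega_Q,\mathbb{R}^n)_0\times\mathbb{R}^n$ satisfies $\mathcal{T}[\mu,c]=0$. Define
\[
u(x)\equiv v_{q}^{-}[\omega,\mu](x)+c \qquad \forall x\in\overline{\mathbb{S}[\Omega_Q]^{-}}.
\]
Since $\mu$ has zero mean, Theorem \ref{sperpot}(i) gives $L[\omega]u=0$ in $\mathbb{S}[\Omega_Q]^{-}$, and by Theorem \ref{sperpot}(iii) the function $u$ lies in $C^{m,\alpha}_{q}(\overline{\mathbb{S}[\Omega_Q]^{-}},\mathbb{R}^n)$. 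The jump formula of Theorem \ref{sperpot}(iv) yields
\[
T(\omega,Du)\nu_{\Omega_Q}=\tfrac{1}{2}\mu+W_{q}^{\ast}[\omega,\mu] \qquad \text{on }\partial\Omega_Q,
\]
so multiplying the identity $\mathcal{T}[\mu,c]=0$ through by $a$ gives
\[
a(x)T(\omega,Du(x))\nu_{\Omega_Q}(x)+b(x)u(x)=0 \qquad \forall x\in\partial\Omega_Q.
\]
Thus $u$ solves the homogeneous Robin problem \eqref{bvp:uneu}, and Proposition \ref{prop:uneu} forces $u\equiv 0$ on $\overline{\mathbb{S}[\Omega_Q]^{-}}$. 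In particular $v_{q}^{-}[\omega,\mu]\equiv -c$, so $T(\omega,Dv_{q}^{-}[\omega,\mu])\nu_{\Omega_Q}=0$ on $\partial\Omega_Q$; applying Theorem \ref{sperpot}(iv) again gives $\tfrac{1}{2}\mu+W_{q}^{\ast}[\omega,\mu]=0$, whence Proposition \ref{prop:bijbdry} implies $\mu=0$. Substituting back yields $c=-v_{q}^{-}[\omega,0]=0$, which completes the proof of injectivity.

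The main conceptual point to get right is the splitting: $\mathcal{K}$ must be placed against the homeomorphism $\mathcal{H}$ from Lemma \ref{lem:bijD} in such a way that the matrix $D=a^{-1}b$ satisfies the invertibility hypothesis of that lemma. The most delicate book-keeping is in the injectivity step, where one has to verify that the sign in the exterior jump formula of Theorem \ref{sperpot}(iv) matches the Robin condition in \eqref{bvp:uneu} once the defining equation is multiplied through by $a$; once this is seen, Proposition \ref{prop:uneu} does the heavy lifting, and the remaining deduction $\mu=0$, $c=0$ follows mechanically from the jump formula and Proposition \ref{prop:bijbdry}.
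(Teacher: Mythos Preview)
Your proof is correct and follows essentially the same route as the paper: the same splitting $\mathcal{T}=\mathcal{H}+\mathcal{K}$, the same application of Lemma \ref{lem:bijD} with $D=a^{-1}b$, the same compactness argument via the gain of one derivative for $v_q[\omega,\cdot]_{|\partial\Omega_Q}$, and the same reduction of injectivity to Proposition \ref{prop:uneu}. The only cosmetic difference is in the last step: the paper concludes $(\mu,c)=(0,0)$ directly from the uniqueness statement in Proposition \ref{prop:rep}, whereas you reprove that uniqueness inline via the jump formula and Proposition \ref{prop:bijbdry}; both are equally valid.
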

\begin{proof} We first note that by Lemma \ref{lem:bijD}   the operator from $C^{m-1,\alpha}(\partial {\Omega_Q},\mathbb{R}^n)_0 \times \mathbb{R}^n$ to  $C^{m-1,\alpha}(\partial {\Omega_Q},\mathbb{R}^n)$ that takes a pair $(\mu,c)$ to 
\[
\frac{1}{2}\mu +W_{q}^\ast[\omega,\mu]+a^{-1}bc
\]
 is a linear homeomorphism. Moreover,  $v_q[\omega, \cdot]_{|\partial \Omega_Q}$ maps continuously $C^{m-1,\alpha}(\partial {\Omega_Q},\mathbb{R}^n)_0$ into  $C^{m,\alpha}(\partial {\Omega_Q},\mathbb{R}^n)$, which is compactly embedded into $C^{m-1,\alpha}(\partial {\Omega_Q},\mathbb{R}^n)$. Therefore, the operator in \eqref{eq:bij1} is a compact perturbation of a Fredholm operator of index $0$, and thus is itself a Fredholm operator of index $0$.  By the Open Mapping Theorem, in order to prove the operator in \eqref{eq:bij1}  is a linear homeomorphism, it suffices to show that it is a bijection. Also, by the Fredholm theory, to show that it is surjective, it is enough to prove the injectivity. To do so, we verify that if 
\begin{equation}\label{eq:bij2}
\frac{1}{2}\mu +W_{q}^\ast[\omega,\mu]+a^{-1}b\Big(v_{q}[\omega, \mu]_{|\partial \Omega_Q}+c\Big)=0\, ,
\end{equation}
 then $(\mu, c)=(0,0)$. So  let $(\mu,c)$ be such that \eqref{eq:bij2} holds. Then the function $v_{q}^-[\omega, \mu]+c$ is a solution of boundary value problem \eqref{bvp:uneu}, and thus Proposition \ref{prop:uneu} implies that  $v_{q}^-[\omega, \mu]+c=0$ in $\overline{\mathbb{S}[\Omega_Q]^-}$. Finally, Proposition \ref{prop:rep} implies that $(\mu,c)=(0,0)$.
\end{proof}

We are now ready to prove our main result on the solvability of the Robin boundary value problem.

\begin{theorem}\label{thm:exbvp}
There exists a unique function $u  \in  C^{m,\alpha}_{\mathrm{loc}}(\overline{\mathbb{S}[\Omega_Q]^{-}},\mathbb{R}^n)$ such that
 \begin{equation}\label{eq:exbvp1}
 \left \lbrace 
 \begin{array}{ll}
L[\omega]u = 0 & \mathrm{in}\  {\mathbb{S}} [\Omega_Q]^-\,, \\
u(x+qe_j) =u(x)+Be_j &  \textrm{$\forall x \in \overline{\mathbb{S}[\Omega_Q]^{-}}, \forall j\in \{1,\dots,n\}$}, \\
a(x)T(\omega,Du(x))\nu_{\Omega_Q}(x)+ b(x)u(x)=g(x) & \textrm{$\forall x \in \partial \Omega_Q$}\,.
 \end{array}
 \right.
 \end{equation}
 Moreover,
\begin{equation}\label{eq:exbvp2}
 u(x)=v_{q}^-[\omega,\mu](x)+c+Bq^{-1}x \qquad \forall x \in \overline{\mathbb{S}[\Omega_Q]^{-}}\, ,
\end{equation}
where $(\mu,c)$ is the unique pair in $C^{m-1,\alpha}(\partial\Omega_Q,\mathbb{R}^n)_0\times \mathbb{R}^n$ such that
\begin{equation}\label{eq:exbvp3}
\begin{split}
\frac{1}{2}\mu(x) &+W_{q}^\ast[\omega,\mu](x)+a^{-1}(x)b(x)\Big(v_{q}[\omega, \mu]_{|\partial \Omega_Q}(x)+c\Big)\\&=a^{-1}(x)g(x)-T(\omega,Bq^{-1})\nu_{\Omega_Q}(x)- a^{-1}(x)b(x)Bq^{-1}x\qquad \forall x \in \partial \Omega_Q\, .
\end{split}
\end{equation}
\end{theorem}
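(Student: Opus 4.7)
The plan is to reduce \eqref{eq:exbvp1} to a genuinely $q$-periodic Robin problem by the substitution $\tilde u(x)\equiv u(x)-Bq^{-1}x$. Since $x\mapsto Bq^{-1}x$ is linear we have $L[\omega](Bq^{-1}x)=0$, and since $Bq^{-1}(x+qe_j)=Bq^{-1}x+Be_j$, the extra shift $Be_j$ appearing in the quasi-periodicity of $u$ is exactly cancelled, so $\tilde u$ lies in $C^{m,\alpha}_{q}(\overline{\mathbb{S}[\Omega_Q]^{-}},\mathbb{R}^n)$ (periodicity combined with $C^{m,\alpha}_{\mathrm{loc}}$ regularity forces boundedness of all derivatives on the fundamental cell). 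Under this substitution the Robin datum transforms into $a\,T(\omega,D\tilde u)\nu_{\Omega_Q}+b\tilde u = g-a\,T(\omega,Bq^{-1})\nu_{\Omega_Q}-b\,Bq^{-1}x$ on $\partial\Omega_Q$.

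For uniqueness, I would take the difference $u_1-u_2$ of two solutions of \eqref{eq:exbvp1}: the shifts $Be_j$ cancel, so $u_1-u_2$ is $q$-periodic, satisfies $L[\omega](u_1-u_2)=0$ in $\mathbb{S}[\Omega_Q]^-$ and the homogeneous Robin condition on $\partial\Omega_Q$. Proposition \ref{prop:uneu} then forces $u_1=u_2$.

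For existence and the representation formula, I would apply Proposition \ref{prop:rep} to $\tilde u$, obtaining a unique pair $(\mu,c)\in C^{m-1,\alpha}(\partial\Omega_Q,\mathbb{R}^n)_0\times\mathbb{R}^n$ with $\tilde u = v_q^-[\omega,\mu]+c$; undoing the substitution yields the representation \eqref{eq:exbvp2}. Inserting \eqref{eq:exbvp2} into the Robin condition in \eqref{eq:exbvp1}, using the jump identity $T(\omega,Dv_q^-[\omega,\mu])\nu_{\Omega_Q}=\tfrac{1}{2}\mu+W_{q}^\ast[\omega,\mu]$ from Theorem \ref{sperpot}(iv), and multiplying from the left by $a^{-1}$, produces precisely equation \eqref{eq:exbvp3}. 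Conversely, starting from the unique solution $(\mu,c)$ of \eqref{eq:exbvp3} provided by Proposition \ref{prop:bij} and defining $u$ by \eqref{eq:exbvp2}, Theorem \ref{sperpot}(i)--(iii) together with the identity $Bq^{-1}(x+qe_j)=Bq^{-1}x+Be_j$ gives the regularity, $L[\omega]u=0$, and the quasi-periodicity, while the Robin condition is recovered by reversing the algebra just used.

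No step in the argument is delicate; the real analytic content is already packaged in Propositions \ref{prop:uneu}, \ref{prop:rep}, and \ref{prop:bij}, and the proof amounts to careful bookkeeping. The only place that requires attention is tracking the sign in the conormal jump relation of Theorem \ref{sperpot}(iv) (one is on the exterior side) and the left multiplication by $a^{-1}$ when transferring the Robin condition into the form \eqref{eq:exbvp3}.
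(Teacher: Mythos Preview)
Your proposal is correct and follows essentially the same approach as the paper: subtract the linear function $Bq^{-1}x$ to reduce to a genuinely $q$-periodic problem, invoke Proposition~\ref{prop:uneu} for uniqueness, and use Proposition~\ref{prop:bij} together with the jump relation of Theorem~\ref{sperpot}(iv) for existence and the integral representation. The only cosmetic difference is that you also invoke Proposition~\ref{prop:rep} to argue that \emph{any} solution must take the form \eqref{eq:exbvp2}, whereas the paper obtains the uniqueness of the pair $(\mu,c)$ directly from the invertibility in Proposition~\ref{prop:bij}; both are equivalent.
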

\begin{proof} We first consider uniqueness. So let $u'$, $u''$ be two solutions in $C^{m,\alpha}_{\mathrm{loc}}(\overline{\mathbb{S}[\Omega_Q]^{-}},\mathbb{R}^n)$ of problem \eqref{eq:exbvp1}. Then we set 
\[
\tilde{u}\equiv u'-u''\, ,
\]
and we note that
\[
 \left \lbrace 
 \begin{array}{ll}
L[\omega]\tilde{u} = 0 & \mathrm{in}\  {\mathbb{S}} [\Omega_Q]^-\,, \\
\tilde{u}(x+qe_j) =\tilde{u}(x) &  \textrm{$\forall x \in \overline{\mathbb{S}[\Omega_Q]^{-}}, \forall j\in \{1,\dots,n\}$}, \\
a(x)T(\omega,D\tilde{u}(x))\nu_{\Omega_Q}(x)+ b(x)\tilde{u}(x)=0 & \textrm{$\forall x \in \partial \Omega_Q$}\,.
 \end{array}
 \right.
\]
Accordingly, Proposition \ref{prop:uneu} implies that $\tilde{u}=0$ and thus $u'=u''$.

We now turn to prove existence. We first note that if $u_\# \in C^{m,\alpha}_{q}(\overline{\mathbb{S}[\Omega_Q]^{-}},\mathbb{R}^n)$ is such that
 \begin{equation}\label{eq:exbvp4}
 \left \lbrace 
 \begin{array}{ll}
L[\omega]u_\# = 0 & \mathrm{in}\  {\mathbb{S}} [\Omega_Q]^-\,, \\
u_\#(x+qe_j) =u_\#(x) &  \textrm{$\forall x \in \overline{\mathbb{S}[\Omega_Q]^{-}}, \forall j\in \{1,\dots,n\}$}, \\
a(x)T(\omega,Du_\#(x))\nu_{\Omega_Q}(x)+ b(x)u_\#(x)&\\
\qquad=g(x)-a(x)T(\omega,Bq^{-1})\nu_{\Omega_Q}(x)- b(x)Bq^{-1}x & \textrm{$\forall x \in \partial \Omega_Q$}\,,
 \end{array}
 \right.
 \end{equation}
 then the function
 \[
 x\mapsto u_\#(x)+Bq^{-1}x
 \]
 is a solution of problem \eqref{eq:exbvp1}. Therefore, in order to prove that the function in \eqref{eq:exbvp2} solves problem \eqref{eq:exbvp1}, it suffices to show that 
 \begin{equation}\label{eq:exbvp5}
 v_{q}^-[\omega,\mu]+c
 \end{equation}
solves problem \eqref{eq:exbvp4}, where $(\mu,c) \in C^{m-1,\alpha}(\partial\Omega_Q,\mathbb{R}^n)_0\times \mathbb{R}^n$ is such that \eqref{eq:exbvp3} holds. We first note that Proposition \ref{prop:bij} ensures the existence and uniqueness of a pair $(\mu,c) \in C^{m-1,\alpha}(\partial\Omega_Q,\mathbb{R}^n)_0\times \mathbb{R}^n$ which solves \eqref{eq:exbvp3}. Accordingly, by Theorem \ref{sperpot}, we immediately verify that the function in \eqref{eq:exbvp5} satisfies the first two conditions of problem \eqref{eq:exbvp1}. Moreover, by a straightforward computation we also verify that equation \eqref{eq:exbvp3} implies the validity of the third condition of problem \eqref{eq:exbvp1} (see Theorem \ref{sperpot}). Thus the proof is complete.

\end{proof}

\section{A remark on a nonlinear Robin-type traction problem}

In this section, we show how the integral equation method of the previous sections can be exploited in order to solve a nonlinear Robin-type traction boundary value problem in a periodic domain. To do so,  we consider a function $G\in C^{0}(\partial \Omega_Q\times \mathbb{R}^n,\mathbb{R}^n)$ such that the non-autonomous composition operator $F_G$ defined by
\[
F_G[v](x)=G(x,v(x)) \qquad \forall x \in \partial \Omega_Q\, , \forall v \in C^{0}(\partial \Omega_Q,\mathbb{R}^n)\, ,
\]
maps $C^{m-1,\alpha}(\partial \Omega_Q,\mathbb{R}^n)$ to itself. Then we introduce the problem
\begin{equation}\label{eq:nlbvp1}
 \left \lbrace 
 \begin{array}{ll}
L[\omega]u = 0 & \mathrm{in}\  {\mathbb{S}} [\Omega_Q]^-\,, \\
u(x+qe_j) =u(x)+Be_j &  \textrm{$\forall x \in \overline{\mathbb{S}[\Omega_Q]^{-}}, \forall j\in \{1,\dots,n\}$}, \\
T(\omega,Du(x))\nu_{\Omega_Q}(x)= G(x,u(x)) & \textrm{$\forall x \in \partial \Omega_Q$}\,.
 \end{array}
 \right.
 \end{equation}

In the theorem below, we show an integral equation formulation of the nonlinear problem \eqref{eq:nlbvp1}.

\begin{theorem}\label{thm:nlbvp}
The map from $C^{m-1,\alpha}(\partial\Omega_Q,\mathbb{R}^n)_0\times \mathbb{R}^n$  to $C^{m,\alpha}_{\mathrm{loc}}(\overline{\mathbb{S}[\Omega_Q]^{-}},\mathbb{R}^n)$ that takes a pair $(\mu,c)$ to the function
\begin{equation}\label{eq:nlbvp2}
 v_{q}^-[\omega,\mu](x)+c+Bq^{-1}x \qquad \forall x \in \overline{\mathbb{S}[\Omega_Q]^{-}}\, ,
\end{equation}
is a bijection from the set of solutions in $C^{m-1,\alpha}(\partial\Omega_Q,\mathbb{R}^n)_0\times \mathbb{R}^n$ such that
\begin{equation}\label{eq:nlbvp3}
\begin{split}
&\frac{1}{2}\mu(x) +W_{q}^\ast[\omega,\mu](x)\\&=G\Big(x,v_{q}[\omega, \mu]_{|\partial \Omega_Q}(x)+c+Bq^{-1}x \Big)-T(\omega,Bq^{-1})\nu_{\Omega_Q}(x)\qquad \forall x \in \partial \Omega_Q\, ,
\end{split}
\end{equation} 
to the set of functions  $u  \in  C^{m,\alpha}_{\mathrm{loc}}(\overline{\mathbb{S}[\Omega_Q]^{-}},\mathbb{R}^n)$ which solve problem \eqref{eq:nlbvp1}.
\end{theorem}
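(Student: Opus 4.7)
The plan is to verify the bijectivity in the three standard steps: well-definedness (pairs $(\mu,c)$ satisfying \eqref{eq:nlbvp3} do produce solutions of \eqref{eq:nlbvp1}), surjectivity (every solution of \eqref{eq:nlbvp1} arises in this way from some such pair), and injectivity of the parametrization. The guiding observation is that the nonlinearity $G$ only enters the Robin condition on $\partial\Omega_Q$; the PDE and the quasi-periodicity are linear and have already been dealt with in Section~\ref{prel}, so the proof reduces, exactly as in Theorem~\ref{thm:exbvp}, to pushing the boundary equation through the jump relations.

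For well-definedness, I would fix a pair $(\mu,c)$ satisfying \eqref{eq:nlbvp3} and set $u(x)\equiv v_q^-[\omega,\mu](x)+c+Bq^{-1}x$. Since $\mu\in C^{m-1,\alpha}(\partial\Omega_Q,\mathbb{R}^n)_0$, Theorem~\ref{sperpot}(i) yields $L[\omega]v_q^-[\omega,\mu]=0$ on $\mathbb{S}[\Omega_Q]^-$, and $L[\omega]$ annihilates both the constant $c$ and the linear term $Bq^{-1}x$, giving the first equation of \eqref{eq:nlbvp1}. The quasi-periodicity condition follows from the $q$-periodicity of $v_q^-[\omega,\mu]$ combined with the identity $Bq^{-1}(x+qe_j)=Bq^{-1}x+Be_j$. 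Finally, the jump formula of Theorem~\ref{sperpot}(iv) gives $T(\omega,Du(x))\nu_{\Omega_Q}(x)=\tfrac12\mu(x)+W_q^*[\omega,\mu](x)+T(\omega,Bq^{-1})\nu_{\Omega_Q}(x)$ on $\partial\Omega_Q$, which by \eqref{eq:nlbvp3} coincides with $G\bigl(x,v_q[\omega,\mu]_{|\partial\Omega_Q}(x)+c+Bq^{-1}x\bigr)=G(x,u(x))$.

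For surjectivity, I would start from an arbitrary solution $u\in C^{m,\alpha}_{\mathrm{loc}}(\overline{\mathbb{S}[\Omega_Q]^-},\mathbb{R}^n)$ of \eqref{eq:nlbvp1} and set $u_\#(x)\equiv u(x)-Bq^{-1}x$. A direct computation from the quasi-periodicity of $u$ shows that $u_\#$ is $q$-periodic, and $L[\omega]u_\#=0$ since both $u$ and $Bq^{-1}x$ are in the kernel of $L[\omega]$; local $C^{m,\alpha}$ regularity plus periodicity then gives $u_\#\in C^{m,\alpha}_q(\overline{\mathbb{S}[\Omega_Q]^-},\mathbb{R}^n)$. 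Proposition~\ref{prop:rep} therefore produces a unique pair $(\mu,c)\in C^{m-1,\alpha}(\partial\Omega_Q,\mathbb{R}^n)_0\times\mathbb{R}^n$ with $u_\#=v_q^-[\omega,\mu]+c$, so $u$ has the form \eqref{eq:nlbvp2}. Plugging this representation into the third condition of \eqref{eq:nlbvp1} and applying Theorem~\ref{sperpot}(iv) one more time yields exactly \eqref{eq:nlbvp3}, so $(\mu,c)$ lies in the source set of the bijection.

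Injectivity follows at once from the uniqueness part of Proposition~\ref{prop:rep}: if two pairs $(\mu,c)$ and $(\mu',c')$ give the same function through \eqref{eq:nlbvp2}, then $v_q^-[\omega,\mu-\mu']+(c-c')=0$, which is the unique representation of the zero (periodic, $L[\omega]$-harmonic) function and forces $(\mu-\mu',c-c')=(0,0)$. I do not anticipate a serious obstacle: the result is essentially a bookkeeping translation of the nonlinear BVP into a nonlinear boundary integral equation, and the nonlinearity never interferes with the representation theorem, which is the real engine. The only mild subtlety is checking that $u_\#$ satisfies the regularity and periodicity hypotheses needed to invoke Proposition~\ref{prop:rep}, but this is automatic from $u\in C^{m,\alpha}_{\mathrm{loc}}$ combined with the cancellation built into the correction $-Bq^{-1}x$.
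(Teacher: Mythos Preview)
Your proposal is correct and follows essentially the same approach as the paper: both arguments subtract the linear term $Bq^{-1}x$ to reduce to the $q$-periodic setting, invoke Proposition~\ref{prop:rep} for the (unique) single-layer-plus-constant representation, and then use the jump relation of Theorem~\ref{sperpot}(iv) to match the boundary condition with the integral equation~\eqref{eq:nlbvp3}. Your write-up is simply more explicit than the paper's terse version, spelling out the well-definedness, surjectivity, and injectivity steps separately.
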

\begin{proof} 
Assume that the function  $u  \in  C^{m,\alpha}_{\mathrm{loc}}(\overline{\mathbb{S}[\Omega_Q]^{-}},\mathbb{R}^n)$ solve problem \eqref{eq:nlbvp1}. Then by Lemma \ref{prop:rep}, there exists a unique pair $(\mu,c)$ in $C^{m-1,\alpha}(\partial \Omega_Q,\mathbb{R}^n)_0\times \mathbb{R}^n$ such that $u$ equals the functions defined by \eqref{eq:nlbvp2}. Then a simple computation based on Theorem \ref{sperpot}, shows that the pair $(\mu,c)$ must solve equation \eqref{eq:nlbvp3}. Conversely, one can easily show that if the pair $(\mu,c)$ of $C^{m-1,\alpha}(\partial \Omega_Q,\mathbb{R}^n)_0\times \mathbb{R}^n$ solves equation \eqref{eq:nlbvp3}, then the function defined in \eqref{eq:nlbvp2} is a solution of problem \eqref{eq:nlbvp1}.
\end{proof}

By Theorem \ref{thm:nlbvp} we can convert the nonlinear boundary value problem \eqref{eq:nlbvp1} into the nonlinear integral equation \eqref{eq:nlbvp3}. Then, by arguing as in \cite[Thm.~8]{DaLa10b}, we can show that under suitable growth conditions equation \eqref{eq:nlbvp3} admits (at least) a solution $(\mu,c)$ in $C^{m-1,\alpha}(\partial\Omega_Q,\mathbb{R}^n)_0\times \mathbb{R}^n$. As a consequence, we can deduce the existence of (at least) a solution of problem \eqref{eq:nlbvp1}.

\section*{Acknowledgement}
P.M.~and G.M.~acknowledge the support from EU through the H2020-MSCA-RISE-2020 project EffectFact,
Grant agreement ID: 101008140.   M.D.R. and P.M.~are   members of the Gruppo Nazionale per l'Analisi Matematica, la Probabilit\`a e le loro Applicazioni (GNAMPA) of the Istituto Nazionale di Alta Matematica (INdAM). P.M.~also acknowledges the support of  the grant ``Challenges in Asymptotic and Shape Analysis - CASA''  of the Ca' Foscari University of Venice. Part of the work was done while P.M.~was visiting Martin Dutko~at Rockfield Software Limited. P.M.~wishes to thank Martin Dutko for valuable discussions and Rockfield Software Limited for the kind hospitality. G.M.~acknowledges also Ser Cymru Future Generation Industrial Fellowship number AU224 -- 80761.

\end{document}